\newtheorem{mainthm}{Theorem}
\numberwithin{equation}{section}
\newtheorem{thm}{Theorem}[section]
\newtheorem{lem}[thm]{Lemma}
\newtheorem{Def}[thm]{Definition}
\theoremstyle{definition}
\newtheorem{Ass}[thm]{Assumption}
\newtheorem{rem}[thm]{Remark}
\newtheorem{cor}[thm]{Corollary}
\newtheorem{Nott}[thm]{Notation}
\DeclareMathOperator{\suppo}{supp}
\DeclareMathOperator*{\wstlim}{w^*-lim}
\newcommand{\R}{\mathbb{R}}
\newcommand{\N}{\mathbb{N}}
\newcommand{\diff}{\mathop{}\!\mathrm{d}}
\newcommand{\wstar}{\overset{\ast}{\rightharpoonup}}
\newcommand{\ueps}{u^{\varepsilon}}
\newcommand{\veps}{v^{\varepsilon}}
\newcommand{\weaks}{\overset{\ast}{\rightharpoonup}}
\newcommand{\weak}{\rightharpoonup}
\newcommand{\doublewidetilde}[1]{{%
  \mathpalette\double@widetilde{#1}%
}}
\newcommand{\double@widetilde}[2]{%
  \sbox\z@{$\m@th#1\widetilde{#2}$}%
  \ht\z@=.9\ht\z@
  \widetilde{\box\z@}%
}
\author{Jakub Skrzeczkowski}
\address{Faculty of Mathematics, Informatics and Mechanics, University of Warsaw, Stefana Banacha 2, 02-097 Warsaw, Poland}
\email{jakub.skrzeczkowski@student.uw.edu.pl}
\thanks{Jakub Skrzeczkowski was supported by National Science Center, Poland through project no. 2017/27/B/ST1/01569. He is grateful to Benoît Perthame for fruitful discussions and helpful suggestions.}
\begin{document}

\title[Fast reaction limit and forward-backward diffusion]{Fast reaction limit and forward-backward diffusion:\\ A Radon-Nikodym approach}

\begin{abstract}
We consider two singular limits: fast reaction limit with nonmonotone nonlinearity and regularization of forward-backward diffusion equation. It was proved by Plotnikov that for cubic-type (nondegenerate) nonlinearities, the limit oscillates between at most three states. In this paper we make his argument more optimal and we sharpen the previous result: we use Radon-Nikodym theorem to obtain a pointwise identity characterizing the Young measure. As a consequence, we establish a simpler condition which implies Plotnikov result for piecewise affine functions. We also prove that the result is true if the Young measure is not supported in the so-called unstable zone, the fact observed in numerical simulations.
\end{abstract}

\keywords{reaction-diffusion, fast reaction limit, forward-backward diffusion, Young measures, cross-diffusion, oscillations, compensated compactness}

\subjclass{35K57, 35B25, 35B36}

\maketitle
\setcounter{tocdepth}{1}
\tableofcontents

\section{Introduction and main results}
\subsection{Presentation of the problem} In this paper we focus on two interesting limit problems: fast-reaction limit in the reaction-diffusion system
\begin{equation}\label{eq:fastreact}
    \begin{split}
        \partial_t \ueps &= \frac{F(\ueps) - \veps}{\varepsilon},\\
        \partial_t \veps &= \Delta \veps + \frac{\veps - F(\ueps)}{\varepsilon}
    \end{split}
\end{equation}
and regularization of the forward-backward parabolic equation $\partial_t u = \Delta F(u)$
\begin{equation}\label{eq:forback}
    \begin{split}
        \partial_t \ueps &= \Delta \veps,\\
        \veps &= F(\ueps) + \varepsilon\, \partial_t \ueps,
    \end{split}
\end{equation}
where $F$ is a nonmonotone function, for simplicity assumed to look like as in Fig. \ref{plot:Fd}. Notice that due to nonmonotone character of $F$, it has three inverses $S_1$, $S_2$ and $S_3$. Both problems are posed on some bounded domain $\Omega \subset \R^d$ and are equipped with initial conditions and usual Neumann boundary conditions.\\

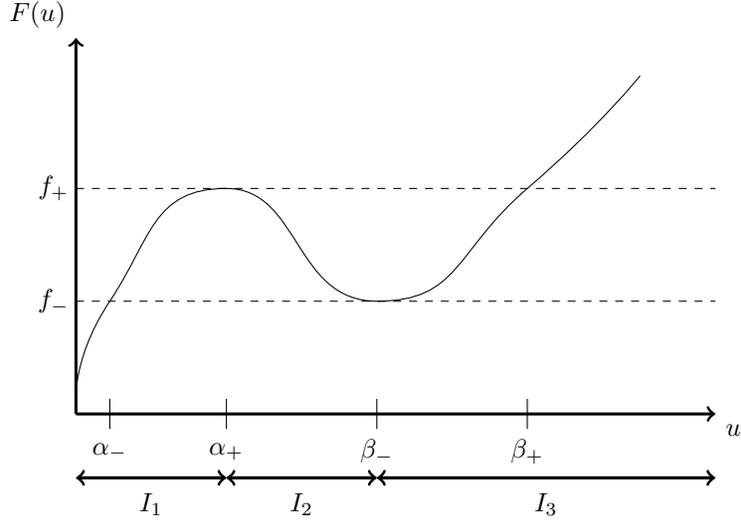
\begin{figure}
\begin{tikzpicture}

\draw[line width=0.4mm,->] (0,0) -- (8.5,0) node[anchor=north west] {$u$};
\draw[line width=0.4mm,->] (0,0) -- (0,5) node[anchor=south east] {$F(u)$};

\draw [black] plot [smooth, tension=1] coordinates { (0,0.0) (0.45,1.5) (2,3.0) (4,1.5) (6,3.0) (7.5, 4.5)};

\draw [dashed] (0,3) -- (8.5,3);
\draw [dashed] (0,1.5) -- (8.5,1.5);
\node at (-0.3, 3) {$f_{+}$};
\node at (-0.3, 1.5) {$f_{-}$};

\draw[line width=1.2pt, <->] (0,-0.85) -- (2,-0.85);
\node at (1,-1.2) {$I_{1}$};
\draw[line width=1.2pt, <->] (2,-0.85) -- (4,-0.85);
\node at (3,-1.2) {$I_{2}$};
\draw[line width=1.2pt, <->] (4,-0.85) -- (8.5,-0.85);
\node at (6.25,-1.2) {$I_{3}$};

\node at (0.45,-0.5) {$\alpha_{-}$};
\node at (2,-0.5) {$\alpha_{+}$};
\node at (4,-0.5) {$\beta_{-}$};
\node at (6,-0.5) {$\beta_{+}$};
\draw (0.45,-0.2) -- (0.45,0.2);
\draw (2,-0.2) -- (2,0.2);
\draw (4,-0.2) -- (4,0.2);
\draw (6,-0.2) -- (6,0.2);

\end{tikzpicture}
\vspace{-4mm}
\caption{Plot of a typical function $F$. It is strictly increasing in the intervals $I_1:= (-\infty, \alpha_{+}]$, $I_3:= [\beta_{-}, \infty)$ and strictly decreasing in $I_2:= (\alpha_{+}, \beta_{-})$. For $r \in [f_{-}, f_{+}]$, the function $F$ is not invertible and equation $F(u) = r$ has three roots $u = S_1(r)\leq S_2(r) \leq S_3(r)$.}
\label{plot:Fd}
\end{figure}

\noindent System \eqref{eq:fastreact} is an interesting toy model for studying oscillations in reaction-diffusion systems as they are known to occur in its steady states \cite{MR3908864}. For monotonone $F$ the problem is fairly classical and has been studied for a great variety of reaction-diffusion systems, also with more than two components \cite{MR2009623, MR3005532, MR4040718,MR2776460} or reaction-diffusion equation coupled with an ODE \cite{MR3655798}. In the limit $\varepsilon \to 0$ one obtains widely studied cross-diffusion systems \cite{MR4119040,MR4072681,MR3359162,MR3165911,MR3350617,MR1415047} where the gradient of one quantity induces a flux of another one. A slightly different yet connected type of problem deals with the fast-reaction limit for irreversible reactions which leads to free boundary problems \cite{MR3501846,MR589954,MR2562164}. Finally, for nonmonotone $F$ as in this paper, the only available result was established very recently in \cite{perthame2020fast} (see below). We also refer to the recent stability analysis of problems of the type \eqref{eq:fastreact} \cite{MR3039206,cygan2021stability,cygan2021instability}. \\

\noindent System \eqref{eq:forback} was extensively studied by Plotnikov \cite{MR1299852,MR1242166} who identified limits as $\varepsilon \to 0$ in terms of Young measures (see below) and by Novick-Cohen and Pego who studied its asymptotics with $\varepsilon>0$ fixed \cite{MR1015926}. Regularization term in \eqref{eq:forback} was also generalized in \cite{MR3466547, MR3148065,MR3010139}. Recently, so called nonstandard analysis was used to study the limit problem in the space of grid functions \cite{MR3899968, MR4151175}.   \\

\noindent It is known \cite{MR1299852,perthame2020fast} that both systems exhibit the following surprising phenomenon: as $\varepsilon \to 0$, $F(\ueps)  \to v$ and $\veps \to v$ converge strongly without any known a priori estimates allowing to conclude so. As a consequence, $u^{\varepsilon}$ converges weakly to
$$
u(t,x) = \lambda_1(t,x)\, S_1(v(t,x)) + \lambda_2(t,x)\, S_2(v(t,x)) + \lambda_3(t,x)\,S_3(v(t,x))
$$
where $\sum_{i=1}^3 \lambda_i(t,x) = 1$. More precisely, if $\mu_{t,x}$ is a Young measure generated by $\{\ueps\}_{\varepsilon > 0}$ we have
$$
\mu_{t,x} = \lambda_1(t,x)\, \delta_{S_1(v(t,x))} + \lambda_2(t,x)\, \delta_{S_2(v(t,x))} + \lambda_3(t,x)\, \delta_{S_3(v(t,x))}
$$
which represents oscillations between phases $S_1(v(t,x))$, $S_2(v(t,x))$ and $S_3(v(t,x))$. The proof exploits a family of energies as well as analysis of related Young measures in the spirit of Murat and Tartar work on conservation laws and compensated compactness \cite{MR894077,MR584398}. The numerical simulations suggests that the middle state, referred to as an unstable phase, is not present \cite{MR2103092} which motivates research on two-phase solutions to such problems \cite{MR2563628, MR2644916, MR2765690,MR2914664} with a result of nonuniqueness when the unstable phase is present \cite{MR3159815}.\\

\noindent So far, the main assumption on $F$ that allows to deduce strong convergence is the so-called nondegeneracy condition: for \eqref{eq:fastreact} it reads
\begin{equation}\label{eq:nondegeneracy:fast_reaction}
\mbox{for all intervals } R \subset (f_-,f_+): \quad \sum_{i=1}^3 a_i\;  \big( S_1'(r)+1\big) =0 \mbox{ for } r\in R \implies a_1 +a_2+a_3=0
\end{equation}
while for \eqref{eq:forback} it reads
\begin{equation}\label{eq:nondegeneracy:forback}
\mbox{for all intervals } R \subset (f_-,f_+): \quad \sum_{i=1}^3 a_i\;  S_1'(r) =0 \mbox{ for } r\in R \implies a_1 +a_2+a_3=0.
\end{equation}
While it is fairly classical for this type of problems \cite{MR657784, MR1015926,MR1299852}, it is hard to be verified for a given nonlinearity $F$. Moreover, the nondegeneracy condition excludes piecewise affine functions which allows for more explicit computations as in \cite{MR2563628}.\\

\subsection{Main results and outline of the paper} In this paper, we take a slightly different approach to study strong convergence. Although we use family of energy identities to characterize Young measure as Plotnikov \cite{MR1299852}, we aim at pointwise identities to obtain optimal amount of information from these energy identities, in particular new results. To achieve this, we use Radon-Nikodym Theorem as explained below. \\

\noindent Let $\{\mu_{t,x}\}_{t,x}$ be Young measure generated by sequence $\{\ueps\}_{\varepsilon \in (0,1)}$ solving either \eqref{eq:fastreact} or \eqref{eq:forback}, i.e. for any bounded function $G: \R \to \R$ we have (up to a subsequence and for a.e. $(t,x) \in (0,T)\times \Omega$)
$$
G(\ueps) \wstar \int_{\R} G(\lambda) \diff \mu_{t,x}(\lambda),
$$
see Appendix \ref{app:YM} if necessary. To analyze amount of $\mu_{t,x}$ on the intervals $I_1$, $I_2$ and $I_3$, see Figure \ref{plot:Fd}, we introduce restrictions
$$
\mu_{t,x}^{(1)} := \mu_{t,x} \, \mathds{1}_{I_{1}}, \qquad \qquad \mu_{t,x}^{(2)} := \mu_{t,x} \, \mathds{1}_{I_{2}}, \qquad \qquad \mu_{t,x}^{(3)} := \mu_{t,x} \, \mathds{1}_{I_{3}}.
$$
The reason we introduce these measures is that in the sequel, we will gain information only about measure $F^{\#}\mu_{t,x}$, i.e. push-forward (image) of $\mu_{t,x}$ along $F$ defined as
$$
F^{\#}\mu_{t,x} = \mu_{t,x}(F^{-1}(A)), \qquad A \subset \R^+. 
$$
Observe that for all $i=1,2,3$ measures $F^{\#}\mu_{t,x}^{(i)}$ are absolutely continuous with respect to $F^{\#}\mu_{t,x}$. Therefore, Radon-Nikodym theorem implies that there exist densities $g^{(1)}(\lambda)$, $g^{(2)}(\lambda)$ and $g^{(3)}(\lambda)$ such that 
\begin{equation}\label{eq:def_density_gi}
F^{\#}\mu_{t,x}^{(i)}(A) = \int_{A} g^{(i)}(\lambda) \diff F^{\#}\mu_{t,x}(\lambda), \qquad \qquad i = 1, 2, 3.
\end{equation}
We also note that for all $A \subset \R^+$
\begin{equation}\label{eq:summabilityFmu}
\sum_{i=1}^3 F^{\#}\mu_{t,x}^{(i)}(A) = 
\sum_{i=1}^3 \mu_{t,x}(F^{-1}(A) \cap I_i) = \mu_{t,x}(F^{-1}(A)) = F^{\#}\mu_{t,x}(A).
\end{equation}
In particular, from \eqref{eq:def_density_gi} and \eqref{eq:summabilityFmu} we deduce that for $F^{\#}\mu_{t,x}$-a.e. $\lambda$ we have
\begin{equation}\label{eq:summabilityg}
\sum_{i=1}^3 g_i(\lambda) = 1.
\end{equation}

\noindent The main result of this paper reads:
\begin{mainthm}\label{mainthm:simple_entropy_equality}
\text{\bf (A)} Let $\{\mu_{t,x}\}_{t,x}$ be Young measure generated by sequence $\{\ueps\}_{\varepsilon \in (0,1)}$ solving \eqref{eq:fastreact}. Then, for almost all $\lambda_0$ (with respect to $F^{\#}\mu_{t,x}$) and all $\tau_0 \neq f_{-}, f_{+}$ we have
$$
\sum_{i=1}^3 (S_i'(\tau_0) + 1) \, \left[\mathds{1}_{\lambda_0 > \tau_0} \, g_i(\lambda_0) - F^{\#}\mu_{t,x}^{(i)}(\tau_0, \infty) \right] + (S_1'(\tau_0) - S_2'(\tau_0)) \, (F^{\#}\mu_{t,x}^{(1)}(\R^+) - g_1(\lambda_0))= 0.
$$
where $S_i$ are inverses of $F$ as in Notation \ref{intro:not_inv_phi} and $g_i$ are Radon-Nikodym densities as in \eqref{eq:def_density_gi}. Moreover, if $\lambda_0 \neq f_{-}, f_{+}$ it holds
\begin{equation}\label{eq:localized_final_entropy}
\left(1 - F^{\#}\mu_{t,x}\left\{\lambda_0\right\} \right) \, \sum_{i=1}^{3} (S_i'(\lambda_0) + 1) \, g_i(\lambda_0) \,  = 0.
\end{equation}
{\bf (B)} If $\{\mu_{t,x}\}_{t,x}$ is the Young measure generated by sequence $\{\ueps\}_{\varepsilon \in (0,1)}$ solving \eqref{eq:forback} the equalities above holds with functions $S_i'$ instead of $S_i' + 1$.
\end{mainthm}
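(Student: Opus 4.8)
The plan is to feed the family of entropy equalities for the limiting Young measure — produced, as in Plotnikov's work and in the spirit of the Murat--Tartar compensated-compactness method, by testing the balance law of the system against functions of $\veps$ and passing to the limit $\varepsilon\to0$ — into the Radon--Nikodym theorem in order to read off pointwise information. For \eqref{eq:fastreact} the balance law is $\partial_t(\ueps+\veps)=\Delta\veps$; testing it against $\psi(\veps)\,\varphi(t,x)$, using the a~priori control of $\nabla\veps$ and the div--curl lemma, and localising in $(t,x)$ after the limit (the weight $S_i'(\tau)+1$ appearing because the conserved quantity $\ueps+\veps$ has barycentre $S_i(v)+v$ on the $i$-th branch), one obtains a two-parameter family of entropy equalities; after pushing forward by $F$ and using \eqref{eq:def_density_gi}, which turns the branch indicators $\mathds{1}_{I_i}$ into the densities $g_i$, these read, with $\nu:=F^{\#}\mu_{t,x}$ and $\nu^{(i)}:=F^{\#}\mu_{t,x}^{(i)}$: for a.e.\ $(t,x)$ and all levels $\tau,\sigma\neq f_{\pm}$,
$$
\sum_{i=1}^3(S_i'(\tau)+1)\,\nu^{(i)}\big((\tau\vee\sigma,\infty)\big)\;-\;(S_1'(\tau)-S_2'(\tau))\,\nu^{(1)}\big((\sigma,\infty)\big)\;=\;c(\tau)\,\nu\big((\sigma,\infty)\big)
$$
for some real number $c(\tau)=c_{t,x}(\tau)$ (identified, by letting $\sigma$ drop below $\inf\suppo\nu$, as $\sum_i(S_i'(\tau)+1)\nu^{(i)}((\tau,\infty))-(S_1'(\tau)-S_2'(\tau))\nu^{(1)}(\R^{+})$). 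For \eqref{eq:forback} the conserved quantity is $\ueps$ alone, which gives the same identities with $S_i'+1$ replaced by $S_i'$. This step is where the entire PDE content sits; everything afterwards is measure theory on $\R^{+}$.

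Since $\nu^{(i)}((\tau\vee\sigma,\infty))=\int\mathds{1}_{\lambda>\sigma}\,\mathds{1}_{\lambda>\tau}\,g_i(\lambda)\diff\nu(\lambda)$ and $\nu^{(1)}((\sigma,\infty))=\int\mathds{1}_{\lambda>\sigma}\,g_1(\lambda)\diff\nu(\lambda)$ by \eqref{eq:def_density_gi}, the displayed equality says precisely that $\int_{(\sigma,\infty)}h_\tau\,\diff\nu = c(\tau)\,\nu((\sigma,\infty))$ for every $\sigma\neq f_{\pm}$, where
$$
h_\tau(\lambda):=\sum_{i=1}^3(S_i'(\tau)+1)\,\mathds{1}_{\lambda>\tau}\,g_i(\lambda)\;-\;(S_1'(\tau)-S_2'(\tau))\,g_1(\lambda).
$$
The half-lines $(\sigma,\infty)$ form a $\pi$-system generating the Borel sets of $\R^{+}$, so the two finite signed measures $A\mapsto\int_A h_\tau\,\diff\nu$ and $A\mapsto c(\tau)\,\nu(A)$ coincide on all Borel sets; by uniqueness of Radon--Nikodym densities this forces $h_{\tau}=c(\tau)$ for $\nu$-a.e.\ $\lambda$. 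Writing this out at $\lambda=\lambda_0$, $\tau=\tau_0$ and substituting the value of $c(\tau_0)$ gives, after rearrangement, exactly the first displayed identity of the theorem. To obtain it for \emph{all} $\tau_0\neq f_{\pm}$ outside a single $\nu$-null set of $\lambda_0$, one runs the argument along a countable dense set of levels $\tau_0$ and then fills in the remaining levels using that $\tau\mapsto S_i'(\tau)$ is continuous while $\tau\mapsto\nu^{(i)}((\tau,\infty))$ is monotone (hence has at most countably many discontinuities).

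For \eqref{eq:localized_final_entropy}, fix a good $\lambda_0\neq f_{\pm}$ and evaluate the identity just obtained at $\tau_0=\lambda_0+\delta$ and at $\tau_0=\lambda_0-\delta$, then let $\delta\downarrow0$. The functions $S_i'$ are continuous at $\lambda_0$; the indicator $\mathds{1}_{\lambda_0>\tau_0}$ jumps from $1$ (left limit) to $0$ (right limit); and the nonincreasing functions $\tau\mapsto\nu^{(i)}((\tau,\infty))$ have one-sided limits at $\lambda_0$ differing by the atom $\nu^{(i)}(\{\lambda_0\})$. Subtracting the two limiting identities, every term without an indicator or a jump cancels and one is left with $\sum_i(S_i'(\lambda_0)+1)\big(g_i(\lambda_0)-\nu^{(i)}(\{\lambda_0\})\big)=0$; applying \eqref{eq:def_density_gi} to the (possibly $\nu$-null) singleton $A=\{\lambda_0\}$ gives $\nu^{(i)}(\{\lambda_0\})=g_i(\lambda_0)\,\nu(\{\lambda_0\})$, and the last display collapses to $\big(1-\nu(\{\lambda_0\})\big)\sum_i(S_i'(\lambda_0)+1)g_i(\lambda_0)=0$, which is \eqref{eq:localized_final_entropy}. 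Part (B) is proved verbatim with $S_i'+1$ replaced by $S_i'$ throughout.

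The genuinely hard part is the first step: establishing the entropy equalities, i.e.\ running the compensated-compactness/div--curl argument that produces them \emph{without} any a~priori strong compactness of $\{\veps\}$ (this is the analytic core and is carried out before the statement). Within the present argument the only delicate point is the bookkeeping around the push-forward: the measures $\nu^{(1)},\nu^{(2)},\nu^{(3)}$ are mutually singular at the branch-end values, and $\mathds{1}_{I_i}$ meets $\{F(\cdot)>\tau\}$ as an upper ray on $I_1,I_3$ but as a lower ray on $I_2$, so one must verify that the densities supplied by Radon--Nikodym are precisely the $g_i$ occurring in the statement and that the interpolation over $\tau_0$ really yields one null set; once this is in place the conclusion is immediate.
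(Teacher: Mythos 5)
The measure--theoretic half of your proposal is correct and is essentially the route the paper takes: the paper also converts the weak identity into the pointwise statement ``$h_{\tau_0}(\lambda_0)=c(\tau_0)$ for $F^{\#}\mu_{t,x}$--a.e.\ $\lambda_0$'' (Lemmas \ref{lem:entropy_equality}--\ref{thm:useful_entropy_identity}, where the localisation is done through the test function $\varphi$ rather than through your $\pi$-system/uniqueness-of-densities argument, which is the same mechanism), and it derives \eqref{eq:localized_final_entropy} exactly as you do, by letting $\tau_0\to\lambda_0^{\pm}$, picking up the atom and using $F^{\#}\mu^{(i)}_{t,x}\{\lambda_0\}=g_i(\lambda_0)\,F^{\#}\mu_{t,x}\{\lambda_0\}$. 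Your explicit handling of a single null set of $\lambda_0$ valid for all levels $\tau_0$ (countable dense set of levels plus one-sided continuity) is a point the paper leaves implicit, and is fine.

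The genuine gap is the first step, which you assume rather than prove, and which in the paper is not ``carried out before the statement'': it \emph{is} the proof of this theorem (Lemma \ref{theorem:well_posed}(\ref{PUB3}),(\ref{PUB7}) and Lemmas \ref{lem:entropy_equality}--\ref{lem:change_of_variables_complicated}). Moreover, the mechanism you sketch would not produce the family you use. Testing the balance law $\partial_t(\ueps+\veps)=\Delta\veps$ against $\psi(\veps)\varphi(t,x)$, i.e.\ applying the div--curl/compensated-compactness lemma to the divergence-free space-time field built from that single conservation law, yields only the \emph{linear} entropy identity
\begin{equation*}
\wstlim_{\varepsilon\to0}\,(\ueps+\veps)\,\psi(\veps)\;=\;\wstlim_{\varepsilon\to0}\,(\ueps+\veps)\;\wstlim_{\varepsilon\to0}\,\psi(\veps),
\end{equation*}
which after push-forward gives a one-parameter identity with weights $S_i(\lambda)+\lambda$ and contains no level parameter $\tau$ and no weights $S_i'(\tau)+1$. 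To obtain your two-parameter family one needs the whole family of nonlinear entropies $\Psi(\ueps)+\Phi(\veps)$ of \eqref{eq:def_of_Psi_Phi} for arbitrary $\phi$; their time-derivative compactness does not follow from the balance law but from multiplying the $\ueps$-equation by $\phi(F(\ueps))$ and the $\veps$-equation by $\phi(\veps)$, so that the stiff terms pair into $(\veps-F(\ueps))(\phi(\veps)-\phi(F(\ueps)))/\varepsilon$, controlled by the $L^2$ bound on $(\veps-F(\ueps))/\sqrt{\varepsilon}$; one must also justify replacing $\veps$ by $F(\ueps)$ inside the limit, using the same bound. Finally, the coefficient $(S_1'(\tau_0)-S_2'(\tau_0))$ multiplying the $F^{\#}\mu^{(1)}_{t,x}$-terms is not explained by your ``barycentre'' heuristic (which at best accounts for $S_i'(\tau)+1$): it arises from the branch-dependent constants in the change of variables $\Psi(S_i(\lambda))=\int_0^{\lambda}\phi(\tau)\,S_i'(\tau)\diff\tau+C_i(\phi)$ with $C_1=0$ and $C_2=C_3=\int_0^{f_+}\phi(\tau)\,(S_1'(\tau)-S_2'(\tau))\diff\tau$, a computation your proposal never performs. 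Since the theorem's content is precisely this family in localized form, the proposal as it stands proves only the reduction, not the theorem.
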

\noindent As $F^{\#}\mu_{t,x}$ turns out to be the Young measure generated by $\{\veps\}_{\varepsilon > 0}$ cf. Corollary \ref{cor:connection_YM}, strong convergence $\veps \to v$ follows from proving that $F^{\#}\mu_{t,x}$ is a Dirac mass cf. Lemma \ref{lem:YM_prop} (A). Equation \eqref{eq:localized_final_entropy} shows that it is sufficient to find $\lambda_0$ in the support such that the sum $\sum_{i=1}^{3} (S_i'(\lambda_0) + 1) \, g_i(\lambda_0)$ does not vanish (some additional care is needed when $\lambda_0 = f_-, f_+$, cf. Lemma \ref{lem:case_f-f+}).\\

\noindent We remark that similar forms of entropy equality as in Theorem \ref{mainthm:simple_entropy_equality} are well-known however they are not so easily formulated and they are usually stated without explicitly identified coefficients standing next to $(S_{i}'(\tau_0) + 1)$. First, we show that the form presented in Theorem \ref{mainthm:simple_entropy_equality} can be used to recover already known result due to Plotnikov \cite{MR1299852} as well as Perthame and Skrzeczkowski \cite{perthame2020fast}.
\begin{mainthm}\label{mainthm:oldplotnikov}
Suppose that nondegeneracy condition \eqref{eq:nondegeneracy:fast_reaction} or \eqref{eq:nondegeneracy:forback} is satisfied. Then, $\veps \to v$ strongly in $L^2((0,T)\times \Omega)$. Moreover, there are nonnegative numbers $\lambda_1(t,x)$, $\lambda_2(t,x)$, $\lambda_3(t,x)$ such that $\sum_{i=1}^3 \lambda_i(t,x) = 1$ and
$$
\mu_{t,x} = \lambda_1(t,x)\, \delta_{S_1(v(t,x))} + \lambda_2(t,x)\, \delta_{S_2(v(t,x))} + \lambda_3(t,x)\, \delta_{S_3(v(t,x))}.
$$
\end{mainthm}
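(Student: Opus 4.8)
The plan is to derive everything from Theorem~\ref{mainthm:simple_entropy_equality}. By Corollary~\ref{cor:connection_YM}, $\nu_{t,x} := F^{\#}\mu_{t,x}$ is the Young measure generated by $\{\veps\}$, so Lemma~\ref{lem:YM_prop}(A) reduces the whole statement to proving that $\nu_{t,x}$ is a Dirac mass for a.e. $(t,x)$: this gives $\veps \to v$ strongly in $L^2((0,T)\times\Omega)$, and then $\nu_{t,x}=\delta_{v(t,x)}$ forces $\mu_{t,x}$ to be supported in $F^{-1}(v(t,x))$, which yields the stated decomposition with $\lambda_i(t,x) := \mu_{t,x}^{(i)}(\R)$. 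I argue in case (A), under \eqref{eq:nondegeneracy:fast_reaction}; case (B) is word for word the same with $S_i'+1$ replaced by $S_i'$, every sign used below being unchanged. Fix $(t,x)$, drop it from the notation, write $\nu,\nu^{(i)},g_i$ for the objects of Theorem~\ref{mainthm:simple_entropy_equality}, and assume towards a contradiction that $\nu$ is not a Dirac mass.

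\emph{Localizing the support.} If $\lambda_0 \in \mathrm{supp}\,\nu$ and $\lambda_0 \notin [f_-,f_+]$, then $F$ is one-to-one near $\lambda_0$, so $\nu$-a.e. near $\lambda_0$ either $g_1\equiv 1$ or $g_3\equiv 1$ (the other two vanishing), whence $\sum_i (S_i'(\lambda_0)+1)g_i(\lambda_0) \in \{S_1'(\lambda_0)+1,\,S_3'(\lambda_0)+1\} \subset (0,\infty)$; by \eqref{eq:localized_final_entropy} this forces $\nu\{\lambda_0\}=1$, a contradiction. Hence $\mathrm{supp}\,\nu \subseteq [f_-,f_+]$. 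If instead $\tau_0 \in (f_-,f_+)\setminus\mathrm{supp}\,\nu$, then $\tau_0\mapsto \nu^{(i)}(\tau_0,\infty)$ is locally constant near $\tau_0$; plugging $\tau_0$ into the first identity of Theorem~\ref{mainthm:simple_entropy_equality}(A) for a $\nu$-a.e. fixed $\lambda_0\in\mathrm{supp}\,\nu$, and writing $S_1'-S_2'=(S_1'+1)-(S_2'+1)$ to absorb the last term, the identity becomes $\sum_i a_i(S_i'(\tau_0)+1)=0$ on a whole neighbourhood of $\tau_0$, with constants $a_i$ summing to $\mathds{1}_{\lambda_0>\tau_0}-\nu(\tau_0,\infty)$. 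Then \eqref{eq:nondegeneracy:fast_reaction} gives $\sum_i a_i=0$, i.e. $\nu$ carries all its mass strictly on one side of $\tau_0$. Hence $\mathrm{supp}\,\nu$ has no bounded complementary interval contained in $(f_-,f_+)$, so $J := \mathrm{supp}\,\nu\cap(f_-,f_+)$ is an interval.

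\emph{The main case: $J$ nondegenerate.} Then $\nu$ has no atom of full mass, so \eqref{eq:localized_final_entropy} says $\sum_i (S_i'(\lambda_0)+1)g_i(\lambda_0)=0$ for $\nu$-a.e. $\lambda_0\in J$. This is the crux: the weights $g_i(\lambda_0)$ still depend on $\lambda_0$, whereas \eqref{eq:nondegeneracy:fast_reaction} concerns \emph{constant} coefficients on an interval, so one must first upgrade. Subtracting the first identity of Theorem~\ref{mainthm:simple_entropy_equality}(A) written at two points $\lambda_0<\lambda_0'$ of $J$, all $\nu^{(i)}(\tau_0,\infty)$-terms cancel and, using $\sum_i g_i\equiv 1$, the difference reduces to $(S_3'(\tau_0)-S_2'(\tau_0))(g_3(\lambda_0)-g_3(\lambda_0'))=0$ for $\tau_0<\lambda_0$ and to $(S_1'(\tau_0)-S_2'(\tau_0))(g_1(\lambda_0)-g_1(\lambda_0'))=0$ for $\tau_0>\lambda_0'$. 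Since $S_1'>0$, $S_3'>0$, $S_2'<0$ on $(f_-,f_+)$, the factors $S_1'-S_2'$ and $S_3'-S_2'$ never vanish there, so $g_1,g_3$ — hence $g_2=1-g_1-g_3$ — are $\nu$-a.e. constant on $J$, say $g_i\equiv a_i$ with $\sum_i a_i=1$. Plugging back, $\sum_i a_i(S_i'(\cdot)+1)=0$ on a dense subset of $J$, hence on all of $J$ by continuity, and \eqref{eq:nondegeneracy:fast_reaction} now forces $\sum_i a_i=0$, contradicting $\sum_i a_i=1$.

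\emph{The remaining cases.} If $J$ is empty or a single point, then $\mathrm{supp}\,\nu$ lies in the at most three-point set $\{f_-\}\cup J\cup\{f_+\}$: any configuration containing some $c\in(f_-,f_+)$ together with extra mass at $f_-$ or $f_+$ produces a bounded complementary interval $(f_-,c)$ or $(c,f_+)$ inside $(f_-,f_+)$ with mass on both sides, contradicting the one-sidedness obtained above (used with the admissible value $\lambda_0=c$), and a one-point support is a Dirac mass. The sole configuration not excluded this way, $\mathrm{supp}\,\nu=\{f_-,f_+\}$, escapes Theorem~\ref{mainthm:simple_entropy_equality} because its localized identity needs $\lambda_0\neq f_\pm$, and is handled by Lemma~\ref{lem:case_f-f+}. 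In all cases $\nu=\nu_{t,x}$ is a Dirac mass, and by the first paragraph the proof is complete. The one genuinely delicate point is the constant-coefficient upgrade in the main case; the rest is bookkeeping around the atypical values $f_\pm$ and the exclusion of fractal supports.
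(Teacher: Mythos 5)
Your overall strategy is sound and most of the steps check out: the reduction to showing $F^{\#}\mu_{t,x}$ is a Dirac mass, the exclusion of support outside $[f_-,f_+]$ via \eqref{eq:localized_final_entropy}, the ``one--sidedness'' argument across gaps of the support, and the constancy upgrade for $g_1,g_3$ (the algebra $(S_1'+1)-(S_1'-S_2')=S_2'+1$ and $d_1+d_2=-d_3$ indeed collapses the difference to $(S_3'-S_2')\,d_3$, and $S_1'-S_2',\,S_3'-S_2'>0$ on $(f_-,f_+)$ since $F$ is Lipschitz). This is, however, a genuinely longer route than the paper's: the paper never localizes the support or proves constancy of the $g_i$. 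It simply applies the first identity of Theorem~\ref{mainthm:simple_entropy_equality} at two support points $\lambda_1<\lambda_2$ in $[f_-,f_+]$ (endpoints allowed, since only $\tau_0\neq f_\pm$ is restricted there), subtracts, and obtains $\sum_i a_i(S_i'(\tau_0)+1)=0$ for \emph{all} $\tau_0\in(\lambda_1,\lambda_2)$ with constant coefficients summing to $1$; nondegeneracy then gives $1=0$ at once, with no case analysis and no appeal to Lemma~\ref{lem:case_f-f+}.

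There is one genuine flaw: your treatment of the remaining configuration $\suppo F^{\#}\mu_{t,x}=\{f_-,f_+\}$ by invoking Lemma~\ref{lem:case_f-f+} is not justified under the hypotheses of this theorem. That lemma assumes the existence of $\tau_0\in(f_-,f_+)$ with $S_1'(\tau_0)\neq S_3'(\tau_0)$, and this is \emph{not} implied by the nondegeneracy condition \eqref{eq:nondegeneracy:fast_reaction}: for the triple $(a_1,a_2,a_3)=(1,0,-1)$ the conclusion $\sum_i a_i=0$ holds trivially, so \eqref{eq:nondegeneracy:fast_reaction} is perfectly compatible with $S_1'\equiv S_3'$ on all of $(f_-,f_+)$ (take, say, $S_1'=S_3'$ nonconstant and $S_2'$ such that $S_1'+1$ and $S_2'+1$ are linearly independent on every subinterval). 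So as written, this last step uses an assumption you do not have. The good news is that you do not need the lemma at all: your own one--sidedness argument covers this case, because the \emph{first} identity of Theorem~\ref{mainthm:simple_entropy_equality} carries no restriction $\lambda_0\neq f_\pm$ (only $\tau_0\neq f_\pm$), and $f_+$ is an atom, hence an admissible $\lambda_0$; taking any $\tau_0\in(f_-,f_+)$ and $\lambda_0=f_+$ yields $F^{\#}\mu_{t,x}(\tau_0,\infty)=1$, contradicting $F^{\#}\mu_{t,x}\{f_-\}>0$. (Equivalently, run the paper's two--point subtraction with $\lambda_1=f_-$, $\lambda_2=f_+$.) With that replacement your proof is complete; Lemma~\ref{lem:case_f-f+} is reserved in the paper for Theorems~\ref{mainthm:emptyunstablephase} and \ref{mainthm:newconditiononF}, where its extra hypothesis is explicitly assumed.
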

\noindent Now, we move to the new results that easily follow from Theorem \ref{mainthm:simple_entropy_equality}. The first one asserts that if one knows a priori that the Young measure $\mu_{t,x}$ is not supported in the interval $I_2$ where $F$ is decreasing, the strong convergence occurs. The fact concerning support of $\mu_{t,x}$ was observed in numerical simulations \cite{MR2103092} and so, the next theorem may serve as a tool to prove strong convergence without nondegeneracy condition.

\begin{mainthm}\label{mainthm:emptyunstablephase}
Suppose that:
\begin{itemize}
    \item there exists $\tau_0 \in (f_-, f_+)$ such that $S_1'(\tau_0) - S_3'(\tau_0) \neq 0$,
    \item the Young measure $\{\mu_{t,x}\}_{t,x}$ is not supported in the interval $I_2$ (see Figure \ref{plot:Fd}).
\end{itemize}
Then, $\veps \to v$ strongly in $L^2((0,T)\times \Omega)$. Moreover, there are nonnegative numbers $\lambda_1(t,x)$, $\lambda_3(t,x)$ such that $\lambda_1(t,x) + \lambda_3(t,x) = 1$ and
$$
\mu_{t,x} = \lambda_1(t,x)\, \delta_{S_1(v(t,x))}  + \lambda_3(t,x)\, \delta_{S_3(v(t,x))}.
$$
\end{mainthm}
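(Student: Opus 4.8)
The plan is to use Theorem~\ref{mainthm:simple_entropy_equality} to show that the image measure $\nu:=F^{\#}\mu_{t,x}$ is, for a.e.\ $(t,x)$, a Dirac mass; everything else is then soft. Indeed, once $\nu=\delta_{v(t,x)}$, strong convergence $\veps\to v$ in $L^2((0,T)\times\Omega)$ follows: by Corollary~\ref{cor:connection_YM}, $\nu$ is the Young measure generated by $\{\veps\}$, and a Dirac Young measure produced by an $L^2$-bounded sequence forces strong $L^2$-convergence (with $v=v(t,x)$), by Lemma~\ref{lem:YM_prop}~(A). Moreover, as $\mu_{t,x}$ is supported in $I_1\cup I_3$ and $F^{\#}\mu_{t,x}=\delta_{v(t,x)}$, the measure $\mu_{t,x}$ is concentrated on $F^{-1}(\{v(t,x)\})\cap(I_1\cup I_3)$, which is $\{S_1(v(t,x)),S_3(v(t,x))\}$ (a single point if $v(t,x)\notin[f_-,f_+]$); denoting the masses by $\lambda_1(t,x),\lambda_3(t,x)$ gives the asserted two-phase form.

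First I would record what the assumption that $\{\mu_{t,x}\}$ is not supported in $I_2$ buys: then $\mu_{t,x}^{(2)}=0$, hence $F^{\#}\mu_{t,x}^{(2)}=0$, and by \eqref{eq:def_density_gi} and \eqref{eq:summabilityg} one has $g_2=0$ and $g_1+g_3=1$ for $\nu$-a.e.\ $\lambda$. Plugging $g_2(\lambda_0)=0$ into the localized identity \eqref{eq:localized_final_entropy}, for $\nu$-a.e.\ $\lambda_0\neq f_-,f_+$ I obtain
\[
\bigl(1-\nu(\{\lambda_0\})\bigr)\,\bigl[(S_1'(\lambda_0)+1)\,g_1(\lambda_0)+(S_3'(\lambda_0)+1)\,g_3(\lambda_0)\bigr]=0
\]
in case~(A), and the same with $S_i'$ in place of $S_i'+1$ in case~(B). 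Because $F$ is strictly increasing on $I_1$ and on $I_3$, the derivatives $S_1'$ and $S_3'$ are strictly positive away from $f_-,f_+$, so the bracket is $\ge g_1(\lambda_0)+g_3(\lambda_0)=1>0$ in case~(A) and is strictly positive in case~(B). Hence $\nu(\{\lambda_0\})=1$ for $\nu$-a.e.\ $\lambda_0\neq f_-,f_+$. Since a probability measure has at most one atom of full mass, either $\nu=\delta_{\lambda_0}$ for some such $\lambda_0$ (and we are done) or $\nu$ charges no point of $\R\setminus\{f_-,f_+\}$, i.e.\ $\nu=p\,\delta_{f_-}+(1-p)\,\delta_{f_+}$ for some $p\in[0,1]$; it then only remains to rule out $p\in(0,1)$.

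In that remaining case the localized identity is useless (it excludes $\lambda_0=f_-,f_+$), so I would instead apply the full identity of Theorem~\ref{mainthm:simple_entropy_equality} at $\lambda_0=f_-$ and at $\lambda_0=f_+$ --- legitimate, as both are $\nu$-atoms --- using the $\tau_0\in(f_-,f_+)$ provided by the first hypothesis. Here $\mu_{t,x}$ is supported on the at most four points of $F^{-1}(\{f_-,f_+\})\cap(I_1\cup I_3)$, so the densities $g_i(f_\pm)$ and the quantities $F^{\#}\mu_{t,x}^{(i)}(\tau_0,\infty)$, $F^{\#}\mu_{t,x}^{(1)}(\R^+)$ entering the identity become explicit functions of $p$ and of the masses of those atoms. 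Subtracting the identity at $f_-$ from the one at $f_+$ and using $g_1+g_3=1$ yields, valid for every $\tau_0\in(f_-,f_+)$, a linear relation among $S_1'(\tau_0),S_2'(\tau_0),S_3'(\tau_0)$ with $p$-dependent coefficients; one checks that this relation cannot hold for a $p\in(0,1)$ once $S_1'(\tau_0)-S_3'(\tau_0)\neq0$ for some $\tau_0$. This step is exactly Lemma~\ref{lem:case_f-f+}, which I would invoke. This completes the proof that $\nu$ is a Dirac mass, and with it the theorem.

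The main obstacle is this endpoint case, $\nu$ supported in $\{f_-,f_+\}$: the sharp localized identity gives no information there, and one must work with the coarser non-localized identity while keeping careful track of the two Radon--Nikodym densities --- the hypothesis $S_1'(\tau_0)\neq S_3'(\tau_0)$ is precisely the nondegeneracy needed to make the resulting linear constraints rigid, and it genuinely cannot be dropped. The only other point is the routine passage from a Dirac Young measure to strong $L^2$-convergence, which uses nothing beyond the a priori $L^2$-estimate on $\{\veps\}$ available for both \eqref{eq:fastreact} and \eqref{eq:forback}.
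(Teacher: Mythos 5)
Your proposal is correct and follows essentially the same route as the paper: use $g_2=0$ together with the localized identity \eqref{eq:localized_final_entropy} to force $F^{\#}\mu_{t,x}$ to be a Dirac mass unless it is supported in $\{f_-,f_+\}$, dispose of that endpoint case via Lemma \ref{lem:case_f-f+} (using $S_1'(\tau_0)\neq S_3'(\tau_0)$), and conclude with Corollary \ref{cor:connection_YM} and Lemma \ref{lem:YM_prop}(A). The only slip is the claim that $S_1'$ and $S_3'$ are strictly positive away from $f_-,f_+$ --- false for the constant extensions when $\lambda_0\notin[f_-,f_+]$ --- but there the corresponding density $g_1$ or $g_3$ vanishes $F^{\#}\mu_{t,x}$-a.e., so the bracket stays positive and the argument (which is exactly how the paper treats the exterior region, borrowing from the proof of Theorem \ref{mainthm:oldplotnikov}) goes through.
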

\noindent The next result shows that the systems \eqref{eq:fastreact} and \eqref{eq:forback} are not exactly the same in view of the strong convergence. Indeed, for the first one, we can establish a simple condition on $F$ implying strong convergence of $\veps \to v$ that does not exclude piecewise affine functions as in the case of nondegeneracy condition \eqref{eq:nondegeneracy:fast_reaction}.

\begin{mainthm}\label{mainthm:newconditiononF}
Let $\{\mu_{t,x}\}_{t,x}$ be a Young measure generated by sequence $\{\ueps\}_{\varepsilon \in (0,1)}$ solving \eqref{eq:fastreact}. Suppose that:
\begin{itemize}
    \item there exists $\tau_0 \in (f_-, f_+)$ such that $S_1'(\tau_0) - S_3'(\tau_0) \neq 0$,
    \item $
S_2'(\lambda) + 1 > 0$ for all $\lambda \in (f_-, f_+)$.
\end{itemize}
Then, $\veps \to v$ strongly in $L^2((0,T)\times \Omega)$. Moreover, there are nonnegative numbers $\lambda_1(t,x)$, $\lambda_2(t,x)$, $\lambda_3(t,x)$ such that $\sum_{i=1}^3 \lambda_i(t,x) = 1$ and
$$
\mu_{t,x} = \lambda_1(t,x)\, \delta_{S_1(v(t,x))} + \lambda_2(t,x)\, \delta_{S_2(v(t,x))} + \lambda_3(t,x)\, \delta_{S_3(v(t,x))}.
$$
\end{mainthm}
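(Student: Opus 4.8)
The plan is to show that $F^{\#}\mu_{t,x}$ is a Dirac mass for almost every $(t,x)\in(0,T)\times\Omega$. Granting this, Corollary \ref{cor:connection_YM} identifies $F^{\#}\mu_{t,x}$ with the Young measure generated by $\{\veps\}_{\varepsilon>0}$, so Lemma \ref{lem:YM_prop}~(A) gives the strong convergence $\veps\to v$ in $L^2((0,T)\times\Omega)$, where $v(t,x)$ denotes the atom of $F^{\#}\mu_{t,x}$; the representation of $\mu_{t,x}$ as a convex combination of $\delta_{S_1(v(t,x))}$, $\delta_{S_2(v(t,x))}$ and $\delta_{S_3(v(t,x))}$ then follows exactly as in the proof of Theorem \ref{mainthm:oldplotnikov}.

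To prove the claim I would argue by contradiction: assume that on a set of positive measure $F^{\#}\mu_{t,x}$ is not a Dirac mass, and fix such a point $(t,x)$. A probability measure carrying an atom of full mass is a Dirac mass, so necessarily $F^{\#}\mu_{t,x}\{\lambda_0\}<1$ for every $\lambda_0\in\R^{+}$; hence the prefactor $1-F^{\#}\mu_{t,x}\{\lambda_0\}$ in \eqref{eq:localized_final_entropy} is strictly positive, and \eqref{eq:localized_final_entropy} forces
\[
\sum_{i=1}^{3}\bigl(S_i'(\lambda_0)+1\bigr)\,g_i(\lambda_0)=0
\]
for $F^{\#}\mu_{t,x}$-almost every $\lambda_0\neq f_-,f_+$. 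Since $S_1$ and $S_3$ are nondecreasing (being inverses of $F$ on the increasing branches $I_1$ and $I_3$) we have $S_1'+1\geq1$ and $S_3'+1\geq1$; the second hypothesis gives $S_2'(\lambda_0)+1>0$ for $\lambda_0\in(f_-,f_+)$; and $g_i(\lambda_0)\geq0$ with $\sum_{i=1}^{3}g_i(\lambda_0)=1$ by \eqref{eq:summabilityg}. Consequently the left-hand side above is strictly positive (for $\lambda_0\notin[f_-,f_+]$ the fibre $F^{-1}(\{\lambda_0\})$ reduces to a single point, so $g_2(\lambda_0)=0$ and exactly one of $g_1(\lambda_0),g_3(\lambda_0)$ equals $1$, with the same conclusion) --- a contradiction, unless $F^{\#}\mu_{t,x}$ gives zero mass to $\R^{+}\setminus\{f_-,f_+\}$, i.e. it is supported in the two-point set $\{f_-,f_+\}$.

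It then remains to exclude the configuration $F^{\#}\mu_{t,x}=\theta\,\delta_{f_-}+(1-\theta)\,\delta_{f_+}$ with $\theta\in(0,1)$ on a positive-measure set of $(t,x)$, and this is precisely where the first hypothesis enters: I would invoke Lemma \ref{lem:case_f-f+}, which rules out exactly this two-atom case under the assumption that there is $\tau_0\in(f_-,f_+)$ with $S_1'(\tau_0)-S_3'(\tau_0)\neq0$, this time using the full pointwise identity of Theorem \ref{mainthm:simple_entropy_equality}~(A) at $\tau_0$ rather than its reduced form \eqref{eq:localized_final_entropy}. With Lemma \ref{lem:case_f-f+} in hand the proof is complete.

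The argument parallels that of Theorem \ref{mainthm:emptyunstablephase}, with the hypothesis ``$\mu_{t,x}$ not supported in $I_2$'', which there forces $g_2=0$, replaced here by the sign condition $S_2'+1>0$; in both theorems the only delicate point is the boundary two-atom configuration isolated in Lemma \ref{lem:case_f-f+}, which I expect to be the main obstacle, the rest being the elementary sign analysis of \eqref{eq:localized_final_entropy} carried out above.
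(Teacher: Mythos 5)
Your proposal is correct and follows essentially the same route as the paper: use the localized identity \eqref{eq:localized_final_entropy} together with $S_2'+1>0$ (and $S_1',S_3'\geq 0$, $\sum_i g_i=1$) to force $F^{\#}\mu_{t,x}\{\lambda_0\}=1$ at support points away from $f_\pm$, dispose of the case $\suppo F^{\#}\mu_{t,x}\subset\{f_-,f_+\}$ via Lemma \ref{lem:case_f-f+} using $S_1'(\tau_0)-S_3'(\tau_0)\neq 0$, and then conclude as in Theorem \ref{mainthm:oldplotnikov}. The only difference is presentational (argument by contradiction versus the paper's direct analysis of support points, mimicking Theorem \ref{mainthm:emptyunstablephase}), so no further comparison is needed.
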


\noindent As an example of function $F$ satisfying assumptions of Theorem \ref{mainthm:newconditiononF} consider
$$
F(\lambda) = \begin{cases}
2\lambda &\mbox{ if } \lambda \in [0,1],\\
3 - 2\lambda &\mbox{ if } \lambda \in \left[1, \frac{5}{4} \right],\\
4\lambda - \frac{9}{2} &\mbox{ if } \lambda \in \left[\frac{5}{4}, \infty \right)
\end{cases}
$$
Then, $S_1'(\lambda) = \frac{1}{2}$, $S_2'(\lambda) = -\frac{1}{2}$ and $S_3'(\lambda) = \frac{1}{4}$ so that $S_1'(\lambda)-S_3'(\lambda) = \frac{1}{4} \neq 0$ and $S_2'(\lambda) + 1 = \frac{1}{2} > 0$. Note that $F$ does not satisfy nondegeneracy condition \eqref{eq:nondegeneracy:fast_reaction} that was used in the previous paper on fast reaction limit with nonmonotone reaction function \cite{perthame2020fast}.\\

\noindent Proofs of Theorem \ref{mainthm:emptyunstablephase} and \ref{mainthm:newconditiononF} are based on equation \eqref{eq:localized_final_entropy}, namely one uses $g_1(\lambda_0) + g_2(\lambda_0) + g_3(\lambda_0) = 1$ to show that for $\lambda_0 \in \suppo F$ we have $F^{\#}\mu_{t,x}\{\lambda_0\} = 1$. Note however that \eqref{eq:localized_final_entropy} is not valid for $\lambda_0 = f_{-}, f_{+}$ so that some additional care is needed if the support of measure $F^{\#}\mu_{t,x}$ accumulates only in these points. This is studied in Lemma \ref{lem:case_f-f+} and it requires an additional assumption that $S_1'(\tau) - S_3'(\tau)$ does not vanish at least for one value of $\tau$, see also Remark \ref{rem:sharper_ass_casef-f+}.\\

\noindent The structure of the paper is as follows. In Section \ref{sect:prop_fast_react} we review (well-known) properties of the fast-reaction system \eqref{eq:fastreact}. Then, in Section \ref{sect:proof_mainmain_thm} we use compensated compactness approach to prove Theorem \ref{mainthm:simple_entropy_equality}. Section \ref{sect:proofs_theorems} is devoted to the simple proofs of Theorems \ref{mainthm:oldplotnikov}, \ref{mainthm:emptyunstablephase} and \ref{mainthm:newconditiononF} while in Section \ref{sect:adaptation} we show how to easily adapt Theorems \ref{mainthm:simple_entropy_equality}--\ref{mainthm:emptyunstablephase} to the case of system \eqref{eq:forback}. Finally, Appendix \ref{app:genlabel} provides necessary background on Young measures, supports of measures and compensated compactness results.

\subsection{Technical assumptions and notation} For the sake of completeness, we list here assumptions of technical nature.
\begin{Nott}\label{intro:not_inv_phi}
Let $S_1(\lambda) \leq S_2(\lambda) \leq S_3(\lambda)$ be the solutions of equation $F(S_i(\lambda)) = \lambda$ as already introduced in Figure \ref{plot:Fd}. These are inverses of $F$ satisfying
$$
S_1:(-\infty,f_{+}] \to (-\infty,\alpha_{+}], \qquad S_2:(f_{-},f_{+}) \to (\alpha_{+},\beta_{-}), \qquad S_3:[f_{-},\infty) \to [\beta_{-},\infty).
$$
Their role is too focus analysis on parts of the plot of $F$ where the monotonicity of $F$ does not change. By a small abuse of notation, we extend functions $S_i$ by a constant value to the whole of~$\R$. We usually write, for images of functions $S_1$, $S_2$, $S_3$ and for their domains
$$
I_1 = (-\infty,\alpha_{+}], \qquad \qquad I_2 = (\alpha_{+},\beta_{-}), \qquad \qquad I_3 = [\beta_{-}, \infty),
$$
$$
J_1 = (-\infty,f_{+}], \qquad \qquad J_2 = (f_{-},f_{+}), \qquad \qquad J_3 = [f_{-}, \infty) .
$$
\end{Nott}

\begin{Ass}[Initial data for \eqref{eq:fastreact}] \label{ass:ID}
Functions $\ueps(0,x) = u_0(x)$, $\veps(0,x) = v_0(x)$ satisfy 
\begin{enumerate}
\item Nonnegativity: $u_0, v_0 \geq 0$.
\item \label{ass:ic2} Regularity: $u_0, v_0 \in C^{2+\alpha}(\overline{\Omega})$ for some $\alpha \in (0,1)$.
\item Boundary condition: $u_0, v_0$ satisfy the Neumann boundary condition.
\end{enumerate}
\end{Ass}

\begin{Ass}[Reaction function $F$] \label{ass:nonlinearity}
We assume that the function $F(u)$ satisfies:
\begin{enumerate}
\item Regularity,  nonnegativity: $F$ is Lipschitz continuous with $F(0) = 0$ and $F \geq 0$.
\item \label{ass:mon} Piecewise monotonicity of $F$: there are $\alpha_{-}<\alpha_{+}<\beta_{-} < \beta_{+}$ such that $F(\beta_{-}) = F(\alpha_{-})$, $F(\alpha_+) = F(\beta_+)$, $F$ is strictly increasing on $(-\infty,\alpha_+) \cup (\beta_{-}, \infty)$ and strictly decreasing on $(\alpha_{+}, \beta_{-})$ (see Fig.~ \ref{plot:Fd}). Moreover, $\lim_{u\to \infty} F(u) = \infty$.
\item Regularity of inverses: functions $\lambda \mapsto S_i'(\lambda)$ are continuous except $\lambda = f_{-}, f_{+}$. 
\end{enumerate}
\end{Ass}

\section{Properties of the fast-reaction system \eqref{eq:fastreact}}\label{sect:prop_fast_react}
\noindent We begin from recalling energy equality and well-posedness result from \cite{perthame2020fast} which we prove below for the sake of completeness.
\begin{lem}[energy equality] Given a smooth test function $\phi: \R \to \R$, we define
\begin{equation}\label{eq:def_of_Psi_Phi}
\Psi(\lambda) := \int_0^{\lambda} \phi(F(\tau)) \diff \tau, \qquad \qquad 
\Phi(\lambda) := \int_0^{\lambda} \phi(\tau) \diff \tau.
\end{equation}
Then, if $(\ueps, \veps)$ solve \eqref{eq:fastreact}, it holds
\begin{equation}\label{eq:PDE_for_kinetic_function_withTF}
\partial_t \Psi(\ueps) + \partial_t \Phi(\veps) = \Delta \Phi(\veps) - \phi'(\veps)\,|\nabla \veps|^2 - \frac{\big(\veps - F(\ueps)\big)\, (\phi(\veps) - \phi(F(\ueps))\big)}{\varepsilon}.
\end{equation}
\end{lem}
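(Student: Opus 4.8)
The plan is a direct pointwise computation: apply the chain rule to the two primitives, substitute the equations of \eqref{eq:fastreact}, and rewrite the diffusive term by a single product-rule identity — no compactness or regularity difficulty is involved beyond what the well-posedness statement provides. Concretely, I would first differentiate $\Psi(\ueps)$ and $\Phi(\veps)$ in time. By \eqref{eq:def_of_Psi_Phi} one has $\Psi'(\lambda) = \phi(F(\lambda))$ and $\Phi'(\lambda) = \phi(\lambda)$, so the chain rule gives $\partial_t \Psi(\ueps) = \phi(F(\ueps))\,\partial_t \ueps$ and $\partial_t \Phi(\veps) = \phi(\veps)\,\partial_t \veps$. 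Substituting the first line of \eqref{eq:fastreact} into the former and the second line into the latter yields
\[
\partial_t \Psi(\ueps) = \phi(F(\ueps))\,\frac{F(\ueps)-\veps}{\varepsilon}, \qquad \partial_t \Phi(\veps) = \phi(\veps)\,\Delta\veps + \phi(\veps)\,\frac{\veps - F(\ueps)}{\varepsilon}.
\]

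Next I would turn $\phi(\veps)\,\Delta\veps$ into a divergence-form term: since $\nabla\Phi(\veps) = \phi(\veps)\,\nabla\veps$, expanding $\Delta\Phi(\veps) = \DIV\big(\phi(\veps)\nabla\veps\big)$ by the product rule gives $\Delta\Phi(\veps) = \phi'(\veps)\,|\nabla\veps|^2 + \phi(\veps)\,\Delta\veps$, hence $\phi(\veps)\,\Delta\veps = \Delta\Phi(\veps) - \phi'(\veps)\,|\nabla\veps|^2$. Then I would add the two expressions for $\partial_t\Psi(\ueps)$ and $\partial_t\Phi(\veps)$ and collect the two reaction contributions over the common denominator $\varepsilon$: both carry the factor $F(\ueps)-\veps$ up to sign, so factoring it out collapses them into the single product term on the right-hand side of \eqref{eq:PDE_for_kinetic_function_withTF}. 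Combining this with the identity for $\phi(\veps)\Delta\veps$ gives \eqref{eq:PDE_for_kinetic_function_withTF}.

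There is no genuine obstacle here. The only points demanding (minimal) care are applying the chain rule to the composite primitives $\Psi,\Phi$ rather than to $\phi$ itself, bookkeeping the signs when merging the two reaction fractions, and justifying the pointwise differentiation — which is immediate since, under Assumptions \ref{ass:ID} and \ref{ass:nonlinearity}, $(\ueps,\veps)$ is a classical solution by the well-posedness result of \cite{perthame2020fast} recalled in this section, so all manipulations hold in the classical sense.
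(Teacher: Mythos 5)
Your strategy coincides with the paper's own proof: differentiate the primitives by the chain rule, substitute the two equations, rewrite $\phi(\veps)\,\Delta\veps=\Delta\Phi(\veps)-\phi'(\veps)|\nabla\veps|^2$, and add. However, the last step as you state it does not yield \eqref{eq:PDE_for_kinetic_function_withTF}. From the two identities you derived (which are the ones that follow from \eqref{eq:fastreact} taken literally as displayed), the reaction contributions combine as
\[
\frac{1}{\varepsilon}\Big[\phi(F(\ueps))\,(F(\ueps)-\veps)+\phi(\veps)\,(\veps-F(\ueps))\Big]
=+\,\frac{\big(\veps-F(\ueps)\big)\,\big(\phi(\veps)-\phi(F(\ueps))\big)}{\varepsilon},
\]
i.e.\ the product term appears with a plus sign, the negative of the term on the right-hand side of \eqref{eq:PDE_for_kinetic_function_withTF}; the phrase ``up to sign'' hides exactly this. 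The sign is not cosmetic: the minus sign is what makes the lemma a dissipation identity, used immediately afterwards to show that $t\mapsto\int_\Omega\big[\Psi(\ueps)+\Phi(\veps)\big]\diff x$ is nonincreasing when $\phi'\ge 0$ and to bound $(F(\ueps)-\veps)/\sqrt{\varepsilon}$ in $L^2$.

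The mismatch originates in the paper itself: its proof multiplies the $\ueps$-equation written as $\partial_t\ueps=(\veps-F(\ueps))/\varepsilon$ and the $\veps$-equation written as $\partial_t\veps=\Delta\veps+(F(\ueps)-\veps)/\varepsilon$, i.e.\ with reaction terms of opposite sign to the display \eqref{eq:fastreact}; with that convention your computation reproduces \eqref{eq:PDE_for_kinetic_function_withTF} verbatim. So you should either adopt that sign convention (and flag the discrepancy in \eqref{eq:fastreact}), or, if \eqref{eq:fastreact} is taken as written, acknowledge that the argument proves the identity with $+$ in place of $-$, which is not the stated lemma. Everything else in your proposal --- the chain rule on the primitives $\Psi,\Phi$, the product-rule identity for $\Delta\Phi(\veps)$, and the appeal to classical regularity of $(\ueps,\veps)$ to justify the pointwise computation --- is correct and identical to the paper's argument.
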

\begin{proof}
Multiplying equation for $\ueps$ in \eqref{eq:fastreact} with $\phi(F(\ueps))$ and equation for $\veps$ in \eqref{eq:fastreact} with $\phi(\veps)$ we obtain
\begin{align*}
\partial_t \Psi(\ueps) &=  \frac{\veps - F(\ueps)}{\varepsilon} \, \phi(F(\ueps)),\\
\partial_t \Phi(\veps) &= \Delta \Phi(\veps) - \phi'(\veps)\,|\nabla \veps|^2 + \frac{F(\ueps) - \veps}{\varepsilon}\, \phi(\veps).
\end{align*}
Summing up these equations we deduce \eqref{eq:PDE_for_kinetic_function_withTF}.
\end{proof}

\begin{lem}\label{theorem:well_posed}
There exists the unique classical solution $\ueps, \veps: [0,\infty)\times \Omega \to \R$  of \eqref{eq:fastreact} which is nonnegative and has regularity
$$
\ueps \in C^{\alpha, 1 + \alpha/2}\left([0,\infty) \times \overline{\Omega}\right), \qquad \veps \in C^{2+\alpha, 1 + \alpha/2}\left([0,\infty) \times \overline{\Omega}\right).
$$
Moreover, we have
\begin{enumerate}
\item \label{PUB0}  $0 \leq \ueps \leq M $, $0 \leq \veps \leq M $ with $M = \max(\|F(u_0)\|_{\infty},\, \|u_0\|_{\infty}, \, \|v_0\|_{\infty},\,f_{+},\,\beta_{+})$,
\item \label{PUB1} $\{\nabla \veps \}_{\varepsilon \in (0,1)}$ is uniformly bounded in $L^2((0,\infty)\times \Omega)$,
\item \label{PUB3} $\left\{\frac{F(\ueps) - \veps}{\sqrt{\varepsilon}}\right\}_{\varepsilon \in (0,1)}$ and $\left\{\sqrt{\varepsilon}\, \Delta \veps \right\}_{\varepsilon \in (0,1)}$ are uniformly bounded in $L^2((0,\infty)\times \Omega)$,
\item \label{PUB5} $\{\partial_t \ueps + \partial_t \veps \}_{\varepsilon \in (0,1)}$ is uniformly bounded in $L^2(0,T;H^{-1}(\Omega))$,
\item \label{PUB6} for all smooth $\varphi: \R \to \R$, $\{\nabla \varphi(\veps) \}_{\varepsilon \in (0,1)}$ is uniformly bounded in $L^2((0,\infty)\times \Omega)$,
\item \label{PUB7} for all smooth $\phi: \R \to \R$, $\{\partial_t \Psi(\ueps) + \partial_t \Phi(\veps) \}_{\varepsilon \in (0,1)}$ is uniformly bounded in $(C(0,T; H^k(\Omega)))^*$ for sufficiently large $k \in \mathbb{N}$.
\end{enumerate}
\end{lem}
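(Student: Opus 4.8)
The plan is to first establish local existence and regularity of classical solutions via standard parabolic theory, and then upgrade this to a global statement using the uniform bounds, proving the a priori estimates along the way. For local existence I would write \eqref{eq:fastreact} as an abstract semilinear system: the $\veps$-equation is genuinely parabolic while the $\ueps$-equation is an ODE in time at each point $x$, coupled through the reaction term. Because $F$ is Lipschitz and the initial data lie in $C^{2+\alpha}(\overline{\Omega})$ and satisfy the compatibility (Neumann) condition, a contraction mapping argument in a suitable parabolic Hölder space $C^{\alpha, 1+\alpha/2}$ yields a unique local-in-time classical solution with the claimed regularity $\ueps \in C^{\alpha, 1+\alpha/2}$, $\veps \in C^{2+\alpha, 1+\alpha/2}$; the higher regularity of $\veps$ comes from Schauder estimates for the heat equation with the Hölder-continuous source $\frac{\veps - F(\ueps)}{\varepsilon}$. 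Uniqueness follows from a Gronwall argument on the difference of two solutions.

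Next I would prove nonnegativity and the $L^\infty$ bound \eqref{PUB0}. Nonnegativity of $\ueps$ follows because wherever $\ueps = 0$ we have $\partial_t \ueps = F(0)/\varepsilon - \veps/\varepsilon = -\veps/\varepsilon$, so combined with nonnegativity of $\veps$ (which follows from the maximum principle for its parabolic equation, using $F(\ueps) \ge 0$) one gets an invariant-region argument; more cleanly, the rectangle $[0,M]^2$ is invariant because on each face the vector field points inward — one checks that when $\ueps = M \ge f_+ \ge F(\ueps)$ is impossible to exceed, and when $\veps = M$ the term $\Delta \veps \le 0$ at an interior maximum together with $F(\ueps) - \veps \le f_+ - M \le 0$ forces $\partial_t \veps \le 0$. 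This invariant-region argument simultaneously gives global existence, since the solution cannot blow up in $L^\infty$.

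For the uniform (in $\varepsilon$) estimates \eqref{PUB1}, \eqref{PUB3}, \eqref{PUB6}, \eqref{PUB7}, the main tool is the energy equality \eqref{eq:PDE_for_kinetic_function_withTF}. Taking $\phi \equiv \mathrm{id}$ (so $\Phi(\lambda) = \lambda^2/2$) and integrating over $\Omega_T$, the boundary terms vanish by the Neumann condition and one obtains
\begin{equation*}
\int_{\Omega_T} |\nabla \veps|^2 + \int_{\Omega_T} \frac{(\veps - F(\ueps))^2}{\varepsilon} \le C,
\end{equation*}
which gives \eqref{PUB1} and the first part of \eqref{PUB3}; the bound on $\sqrt{\varepsilon}\,\Delta \veps$ then follows from the $\veps$-equation written as $\sqrt{\varepsilon}\,\Delta \veps = \sqrt{\varepsilon}\,\partial_t \veps - \frac{\veps - F(\ueps)}{\sqrt{\varepsilon}}$ after testing with $\Delta\veps$, or directly from an $L^2$ estimate on the equation. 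For \eqref{PUB6} one uses \eqref{eq:PDE_for_kinetic_function_withTF} with general smooth $\phi$ (with bounded derivative on $[0,M]$), noting the reaction term $\frac{(\veps - F(\ueps))(\phi(\veps) - \phi(F(\ueps)))}{\varepsilon} \ge 0$ by monotonicity of $\phi$ if $\phi$ is nondecreasing — more carefully, it is bounded by $\mathrm{Lip}(\phi) \frac{(\veps - F(\ueps))^2}{\varepsilon}$, already controlled — so $\int_{\Omega_T}\phi'(\veps)|\nabla\veps|^2 \le C$; choosing $\phi' = (\varphi')^2$ gives $\int |\nabla \varphi(\veps)|^2 \le C$. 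Estimate \eqref{PUB5} comes from adding the two equations in \eqref{eq:fastreact}: $\partial_t(\ueps + \veps) = \Delta \veps$, and $\Delta \veps$ is bounded in $L^2(0,T;H^{-1})$ since $\veps$ is bounded in $L^2(0,T;H^1)$ by \eqref{PUB0}–\eqref{PUB1}. Finally \eqref{PUB7} follows by integrating \eqref{eq:PDE_for_kinetic_function_withTF} against a test function in $C(0,T;H^k(\Omega))$: the right side consists of $\Delta \Phi(\veps)$ (controlled once $k$ large enough to absorb two derivatives), the term $\phi'(\veps)|\nabla\veps|^2$ (bounded in $L^1$ by \eqref{PUB6}), and the reaction term (bounded in $L^1$ by \eqref{PUB3}), all of which embed into $(C(0,T;H^k))^*$ for $k > d/2$.

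The main obstacle I anticipate is not any single estimate — each is a routine consequence of \eqref{eq:PDE_for_kinetic_function_withTF} and the maximum principle — but rather making the local existence and regularity bootstrap fully rigorous: one must be careful that the coupled ODE–PDE structure does not cost regularity (it does not, because $F$ Lipschitz preserves the $C^\alpha$ class of $\veps$ and the ODE for $\ueps$ then produces $C^{1+\alpha/2}$ in time), and that the compatibility conditions on $u_0, v_0$ are exactly what is needed for the $C^{2+\alpha,1+\alpha/2}$ regularity of $\veps$ up to $t = 0$. Since the statement explicitly says this is proved in \cite{perthame2020fast} and reproduced "for the sake of completeness," I would keep the local theory brief and concentrate the exposition on the uniform estimates, which are what the rest of the paper actually uses.
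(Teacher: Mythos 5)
Your estimates \eqref{PUB1}--\eqref{PUB7} follow essentially the paper's own route: integrate the energy identity \eqref{eq:PDE_for_kinetic_function_withTF} with $\phi=\mathrm{id}$ for \eqref{PUB1} and \eqref{PUB3}, use $\partial_t\ueps+\partial_t\veps=\Delta\veps$ for \eqref{PUB5}, the chain rule (or your $\phi'=(\varphi')^2$ variant) for \eqref{PUB6}, and a duality argument with $H^k(\Omega)\hookrightarrow L^\infty(\Omega)$ for \eqref{PUB7} (the paper integrates by parts only once, pairing $\nabla\Phi(\veps)$ with $\nabla\varphi$, which avoids the boundary term $\int_{\partial\Omega}\Phi(\veps)\,\partial_n\varphi$ that your ``absorb two derivatives'' version produces; that term can still be handled by a trace estimate, but the one-integration-by-parts version is cleaner). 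The local theory is likewise consistent with the paper, which simply cites classical results and \cite{perthame2020fast}.

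The genuine gap is in your proof of \eqref{PUB0}, which is also where you depart from the paper. Your invariant-rectangle argument for $[0,M]^2$ does not close: on the face $\{\ueps=M\}$ the reaction term points inward only if $\veps\le F(M)$ for all admissible $\veps\le M$, i.e.\ only if $F(M)\ge M$, while on the face $\{\veps=M\}$ you need $F(\ueps)\le M$ for all $\ueps\le M$, i.e.\ $F(M)\le M$; neither inequality follows from Assumption 1.3 or from the definition of $M$, and the two requirements are incompatible unless $F(M)=M$, so the rectangle $[0,M]^2$ need not be invariant (your phrase ``$\ueps=M\ge f_+\ge F(\ueps)$ is impossible to exceed'' is exactly where this breaks: $F(\ueps)$ can exceed $f_+$ once $\ueps>\beta_+$). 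There is also a sign issue: at $\ueps=0$ you compute $\partial_t\ueps=-\veps/\varepsilon\le 0$ from the displayed system \eqref{eq:fastreact}, which would push $\ueps$ negative; note that the paper's own derivation of \eqref{eq:PDE_for_kinetic_function_withTF} uses $\partial_t\Psi(\ueps)=\frac{\veps-F(\ueps)}{\varepsilon}\phi(F(\ueps))$, i.e.\ the opposite sign convention, so you must fix the convention before running any maximum-principle argument. The paper avoids all face-by-face checks: it obtains \eqref{PUB0} (and thereby global existence) from the Lyapunov functional $t\mapsto\int_\Omega\big[\Psi(\ueps)+\Phi(\veps)\big]\diff x$, which is nonincreasing by \eqref{eq:PDE_for_kinetic_function_withTF} whenever $\phi'\ge0$, choosing $\phi$ to vanish on $(0,M)$ and be strictly increasing on $(M,\infty)$; since the functional vanishes at $t=0$ and is nonnegative, it vanishes for all times, which bounds $\ueps$ and $\veps$ simultaneously (full details being deferred to \cite[Theorem 3.1]{perthame2020fast}). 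You should either adopt that entropy argument or, if you insist on an invariant-region proof, enlarge the region (e.g.\ allow different bounds for $\ueps$ and $\veps$, with the $u$-bound of the form $\max(\|u_0\|_\infty,S_3(\sup\veps))$) rather than using the same constant $M$ on both components.
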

\begin{proof}
Existence and uniqueness of global solution as well as points \eqref{PUB0}--\eqref{PUB3} were proven in \cite[Theorem 3.1]{perthame2020fast} so we only sketch the argument. First, local well-posedness and nonnegativity follows from classical theory \cite{MR755878}. To extend existence and uniqueness to an arbitrary interval of time, we need to prove a priori estimates as in \eqref{PUB0}. To this end, we note that thanks to \eqref{eq:PDE_for_kinetic_function_withTF}, the nonnegative map
\begin{equation*}
t \mapsto \int_{\Omega} \big[\Psi(\ueps(t,x)) +\Phi(\veps(t,x))\big] \diff x
\end{equation*}
is nonincreasing whenever $\phi'\geq0$. Choosing $\phi$ vanishing on $(0,M)$ and stricly increasing for $(M,\infty)$ we obtain \eqref{PUB0} and global well-posedness. Then, \eqref{PUB1} and \eqref{PUB3} follows from \eqref{eq:PDE_for_kinetic_function_withTF} with $\phi(v) = v$. Furthemore, \eqref{PUB5} follows from the equality $\partial_t \ueps + \partial_t \veps = \Delta \veps$ and property \eqref{PUB1} while \eqref{PUB6} follows from the chain rule for Sobolev functions, boundedness of $\veps$ from \eqref{PUB0} and \eqref{PUB1}. Finally, to see \eqref{PUB7} we choose $k \geq d$ so that $H^k(\Omega)$ embedds continuously into $L^{\infty}(\Omega)$. Let $\varphi \in C(0,T; H^k(\Omega))$. Note that there is a constant $C$ such that
\begin{equation}\label{eq:embedding_est}
\| \varphi \|_{\infty} \leq C\, \| \varphi \|_{C(0,T; H^k(\Omega))}, \qquad \qquad \| \varphi \|_{L^2(0,T;H^1(\Omega))} \leq C\, \| \varphi \|_{C(0,T; H^k(\Omega))}.
\end{equation}
Thanks to \eqref{eq:PDE_for_kinetic_function_withTF} we have
\begin{multline*}
\int_{(0,T)\times\Omega} \, (\partial_t \Psi(\ueps) + \partial_t \Phi(\veps))\,\varphi \diff t \diff x - \int_{(0,T)\times\Omega} \nabla \Phi(\veps) \cdot \nabla \varphi  \diff t \diff x = \\ =  -\int_{(0,T)\times\Omega} \phi'(\veps)\,|\nabla \veps|^2 \, \varphi \diff t \diff x -\int_{(0,T)\times\Omega} \frac{\big(\veps - F(\ueps)\big)\, (\phi(\veps) - \phi(F(\ueps))\big)}{\varepsilon} \, \varphi \diff t \diff x.
\end{multline*}
As $|\phi'(\veps)|\leq C$ and $|\phi(\veps) - \phi(F(\ueps)|\leq C\, |\veps - F(\ueps)|$ we use bounds \eqref{eq:embedding_est} together with points \eqref{PUB1} and \eqref{PUB3} to deduce for some possibly larger constant $C$ (independent of $\varepsilon$)
$$
\left|\int_{(0,T)\times\Omega} \, (\partial_t \Psi(\ueps) + \partial_t \Phi(\veps))\,\varphi \diff t \diff x \right| \leq C\, \| \varphi \|_{C(0,T; H^k(\Omega))}.
$$
\end{proof}
\begin{cor}\label{cor:connection_YM}
Let $\{\mu_{t,x}\}_{t,x}$ and $\{\nu_{t,x}\}_{t,x}$ be Young measures generated by sequences $\{\ueps\}_{\varepsilon>0}$ and $\{\veps\}_{\varepsilon>0}$ respectively. Combining Lemma \ref{theorem:well_posed} (\ref{PUB3}) and Lemma \ref{lem:YM_prop} (B, C) we obtain that $F^{\#}\mu_{t,x} = \nu_{t,x}$. 
\end{cor}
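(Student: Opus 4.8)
The plan is to upgrade the a priori bound of Lemma \ref{theorem:well_posed} \eqref{PUB3} into strong convergence $F(\ueps) - \veps \to 0$ and then identify Young measures. First I would observe that Lemma \ref{theorem:well_posed} \eqref{PUB3} gives a constant $C$ with $\left\| (F(\ueps) - \veps)/\sqrt{\varepsilon} \right\|_{L^2((0,\infty)\times\Omega)} \le C$, whence $\left\| F(\ueps) - \veps \right\|_{L^2((0,T)\times\Omega)} \le C\sqrt{\varepsilon} \to 0$, i.e. $F(\ueps) - \veps \to 0$ strongly in $L^2((0,T)\times\Omega)$. I would then pass to a single subsequence (not relabeled) along which $\{\ueps\}$ generates $\mu_{t,x}$, $\{\veps\}$ generates $\nu_{t,x}$, and $F(\ueps) - \veps \to 0$ a.e.\ and in $L^2$, so that all three facts are available simultaneously. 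Note that by Lemma \ref{theorem:well_posed} \eqref{PUB0} the sequences $\{\ueps\}$, $\{\veps\}$ are uniformly bounded in $L^\infty$ and, $F$ being continuous by Assumption \ref{ass:nonlinearity}, so is $\{F(\ueps)\}$; this supplies the uniform integrability hypotheses needed below.

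Next I would invoke the composition property of Young measures, Lemma \ref{lem:YM_prop} (C): since $\{\ueps\}$ generates $\mu_{t,x}$ and $F$ is continuous, the sequence $\{F(\ueps)\}$ generates the push-forward $F^{\#}\mu_{t,x}$. Then, since $\veps = F(\ueps) - \big(F(\ueps) - \veps\big)$ differs from $F(\ueps)$ by a sequence converging to $0$ strongly in $L^2$, the stability property Lemma \ref{lem:YM_prop} (B) yields that $\{\veps\}$ generates the \emph{same} Young measure as $\{F(\ueps)\}$, namely $F^{\#}\mu_{t,x}$.

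Finally, uniqueness of the Young measure generated by a (sub)sequence forces $\nu_{t,x} = F^{\#}\mu_{t,x}$ for a.e.\ $(t,x)$, which is the claim. I do not expect a genuine obstacle here: the only points requiring care are the bookkeeping of subsequences, so that $\mu_{t,x}$, $\nu_{t,x}$ and the a.e.\ convergence of $F(\ueps) - \veps$ all hold along one common subsequence, and checking that the hypotheses of Lemma \ref{lem:YM_prop} (B), (C) (uniform $L^\infty$ bounds, continuity of $F$) are met — both immediate from Lemma \ref{theorem:well_posed} \eqref{PUB0} and Assumption \ref{ass:nonlinearity}.
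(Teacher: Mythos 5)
Your argument is correct and is exactly the route the paper intends: the corollary itself is proved by combining Lemma \ref{theorem:well_posed} (\ref{PUB3}) (giving $F(\ueps)-\veps \to 0$ in $L^2$, hence a.e.\ along a subsequence) with Lemma \ref{lem:YM_prop} (C) (so $\{F(\ueps)\}$ generates $F^{\#}\mu_{t,x}$) and (B) (so $\{\veps\}$ generates the same measure, i.e.\ $\nu_{t,x}=F^{\#}\mu_{t,x}$). Your additional care about extracting one common subsequence and the $L^\infty$/continuity hypotheses is just the bookkeeping the paper leaves implicit.
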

\section{Proof of Theorem \ref{mainthm:simple_entropy_equality} for fast-reaction system \eqref{eq:fastreact}}\label{sect:proof_mainmain_thm}

\noindent We begin with an entropy equality.
\begin{lem}[entropy equality]\label{lem:entropy_equality}
Let $\Psi$ and $\Phi$ be defined with \eqref{eq:def_of_Psi_Phi}. Let $g_i$ be densities given by \eqref{eq:def_density_gi}. Then, for almost all $\lambda_0$ (with respect to $F^{\#}\mu_{t,x}$) we have
\begin{equation}\label{eq:entropy_equality}
\sum_{i=1}^3 (\Psi(S_i(\lambda_0)) + \Phi(\lambda_0)) \,  g_i(\lambda_0) = \sum_{i=1}^3 \int_{\R^+} (\Psi(S_i(\lambda)) + \Phi(\lambda)) \, g_i(\lambda) \diff F^{\#}\mu_{t,x}(\lambda),
\end{equation}
where $S_i$ are inverses of $F$ as in Notation \ref{intro:not_inv_phi}.
\end{lem}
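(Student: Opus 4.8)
The plan is to recast \eqref{eq:entropy_equality} as the assertion that a scalar function on $\R^{+}$ is $F^{\#}\mu_{t,x}$-almost everywhere constant, and to deduce this from a compensated compactness argument. Set $e^{\eps}:=\Psi(\ueps)+\Phi(\veps)$ and $H(\lambda):=\Psi(\lambda)+\Phi(F(\lambda))$; since $\veps$ and $F(\ueps)$ stay in a fixed compact interval by Lemma \ref{theorem:well_posed}~(\ref{PUB0}), on which $\Phi$ is Lipschitz, while $\veps-F(\ueps)\to0$ in $L^{2}((0,T)\times\Omega)$ by Lemma \ref{theorem:well_posed}~(\ref{PUB3}), we have $e^{\eps}-H(\ueps)=\Phi(\veps)-\Phi(F(\ueps))\to0$ in $L^{2}$. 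The first step is the change-of-variables identity
\[
\int_{\R^{+}}\Bigl(\sum_{i=1}^{3}\bigl(\Psi(S_{i}(\lambda))+\Phi(\lambda)\bigr)\,g_{i}(\lambda)\Bigr)\kappa(\lambda)\diff F^{\#}\mu_{t,x}(\lambda)=\int_{\R}H(\lambda')\,\kappa\bigl(F(\lambda')\bigr)\diff\mu_{t,x}(\lambda'),
\]
valid for every bounded Borel $\kappa$ and a.e.\ $(t,x)$: one replaces $g_{i}\diff F^{\#}\mu_{t,x}$ by $\diff F^{\#}\mu_{t,x}^{(i)}$ using \eqref{eq:def_density_gi}, rewrites each $F^{\#}\mu_{t,x}^{(i)}$-integral as a $\mu_{t,x}^{(i)}$-integral through the push-forward, uses $S_{i}\circ F=\mathrm{id}$ on $I_{i}$ (so that $\Psi(S_{i}(F(\lambda')))+\Phi(F(\lambda'))=H(\lambda')$ for $\mu_{t,x}^{(i)}$-a.e.\ $\lambda'$, the endpoints being covered by Notation \ref{intro:not_inv_phi} and Assumption \ref{ass:nonlinearity}~(\ref{ass:mon})), and sums with $\mu_{t,x}=\sum_{i}\mu_{t,x}^{(i)}$. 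Write $h(\lambda):=\sum_{i}\bigl(\Psi(S_{i}(\lambda))+\Phi(\lambda)\bigr)g_{i}(\lambda)$, a bounded function, and observe $\int_{\R^{+}}h\diff F^{\#}\mu_{t,x}=\int_{\R}H\diff\mu_{t,x}=:e$ (take $\kappa\equiv1$). Since $h$ is bounded, \eqref{eq:entropy_equality} --- i.e.\ $h=e$ holds $F^{\#}\mu_{t,x}$-a.e.\ --- is equivalent to $\int_{\R^{+}}h\,\kappa\diff F^{\#}\mu_{t,x}=e\int_{\R^{+}}\kappa\diff F^{\#}\mu_{t,x}$ for all bounded Borel $\kappa$ (the nontrivial direction follows by taking $\kappa=\operatorname{sgn}(h-e)$). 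Inserting the change-of-variables identity on the left, $\int_{\R^{+}}\kappa\diff F^{\#}\mu_{t,x}=\int_{\R}\kappa\circ F\diff\mu_{t,x}$ on the right, and $e=\int_{\R}H\diff\mu_{t,x}$, this is in turn equivalent to
\[
\int_{\R}H(\lambda')\,\kappa\bigl(F(\lambda')\bigr)\diff\mu_{t,x}(\lambda')=\Bigl(\int_{\R}H\diff\mu_{t,x}\Bigr)\Bigl(\int_{\R}\kappa\circ F\diff\mu_{t,x}\Bigr)
\]
holding for all bounded Borel $\kappa$.

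The second step reduces this factorization to a statement about weak limits of explicit sequences. It is enough to treat $\kappa$ in a fixed countable family dense in $C_{c}(\R)$, so that $\kappa$ may be taken smooth and Lipschitz. Then $\kappa(F(\ueps))-\kappa(\veps)\to0$ in $L^{2}$, and since all factors are uniformly bounded, $H(\ueps)\,\kappa(F(\ueps))-e^{\eps}\,\kappa(\veps)\to0$ in $L^{1}((0,T)\times\Omega)$. By definition of the Young measure $\mu_{t,x}$ generated by $\{\ueps\}$ --- using also that $e^{\eps}-H(\ueps)\to0$ in $L^{2}$ (noted above) and the identity $\nu_{t,x}=F^{\#}\mu_{t,x}$ for the Young measure generated by $\{\veps\}$ (Corollary \ref{cor:connection_YM}) --- one has, for a.e.\ $(t,x)$, that $e^{\eps}\wstar\int_{\R}H\diff\mu_{t,x}$, $\ \kappa(\veps)\wstar\int_{\R}\kappa\circ F\diff\mu_{t,x}$, and $H(\ueps)\,\kappa(F(\ueps))\wstar\int_{\R}H\,(\kappa\circ F)\diff\mu_{t,x}$. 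Hence the factorization above is equivalent to $e^{\eps}\,\kappa(\veps)\wstar\bigl(\wstlim e^{\eps}\bigr)\bigl(\wstlim\kappa(\veps)\bigr)$ for a.e.\ $(t,x)$.

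The third step establishes this relation by compensated compactness. By Lemma \ref{theorem:well_posed}, $e^{\eps}$ is bounded in $L^{\infty}$ and $\{\partial_{t}e^{\eps}\}$ is bounded in $\bigl(C(0,T;H^{k}(\Omega))\bigr)^{*}$ (points (\ref{PUB0}) and~(\ref{PUB7})), while $\kappa(\veps)$ is bounded in $L^{\infty}$ and $\{\nabla_{x}\kappa(\veps)\}$ is bounded in $L^{2}$ (point (\ref{PUB6})). One applies the div--curl / compensated compactness result of Appendix \ref{app:genlabel} to the fields $A^{\eps}:=(e^{\eps},0,\dots,0)$ and $B^{\eps}:=(\kappa(\veps),0,\dots,0)$ on $(0,T)\times\Omega\subset\R^{1+d}$: here $\operatorname{div}_{t,x}A^{\eps}=\partial_{t}e^{\eps}$ is precompact in $H^{-1}_{\mathrm{loc}}$ (by an Aubin--Lions--Simon type argument from the two bounds just listed), and $\operatorname{curl}_{t,x}B^{\eps}$ is bounded in $L^{2}$, hence precompact in $H^{-1}_{\mathrm{loc}}$, so $A^{\eps}\cdot B^{\eps}=e^{\eps}\kappa(\veps)$ converges in $\mathcal{D}'$ to $A\cdot B=(\wstlim e^{\eps})(\wstlim\kappa(\veps))$. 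Running the equivalences of the first two steps backwards --- over the countable dense family of $\kappa$'s, intersecting the associated $(t,x)$-null sets, and then passing to $\kappa=\operatorname{sgn}(h-e)$ by density (legitimate as $h$ is bounded) --- yields \eqref{eq:entropy_equality}. I expect the compensated compactness step to be the main difficulty: the time derivative $\partial_{t}e^{\eps}$ is controlled only in a weak negative-order norm, so one genuinely needs the parabolic form of the div--curl lemma (mollification in the space variable combined with a compactness argument in time); the remaining manipulations --- with push-forward measures and the Radon--Nikodym densities $g_{i}$ --- are routine, and are where the new ingredient of the paper enters.
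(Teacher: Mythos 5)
Your proposal is in substance the paper's own proof, run in reverse: the same energy $\Psi(\ueps)+\Phi(\veps)$, the same bounds from Lemma \ref{theorem:well_posed} (\ref{PUB0}), (\ref{PUB3}), (\ref{PUB6}), (\ref{PUB7}), the same product-limit result (Lemma \ref{lem:comp_comp_moussa}), the same replacement of $\veps$ by $F(\ueps)$, and the same push-forward/Radon--Nikodym bookkeeping, with your countable family of test functions and the $\kappa=\operatorname{sgn}(h-e)$ argument being a perfectly fine way to make the ``for $F^{\#}\mu_{t,x}$-a.e.\ $\lambda_0$'' step explicit. The one correction: the div--curl packaging is both unjustified as written and unnecessary --- precompactness of $\partial_t e^{\eps}$ in $H^{-1}_{\mathrm{loc}}$ does not follow from boundedness of $e^{\eps}$ in $L^{\infty}$ and of $\partial_t e^{\eps}$ in $(C(0,T;H^k(\Omega)))^*$ --- but it is also not needed, since Lemma \ref{lem:comp_comp_moussa} applies verbatim with $a_{\eps}=\kappa(\veps)$ (bounded in $L^2(0,T;H^1(\Omega))$ by (\ref{PUB0}) and (\ref{PUB6})) and $b_{\eps}=e^{\eps}$ (bounded in $L^{\infty}$, with $\partial_t b_{\eps}$ controlled by (\ref{PUB7})); this is exactly the ``parabolic form'' of the argument that you flag as the main difficulty, and it is already supplied by the appendix.
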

\begin{proof}
Thanks to Lemma \ref{theorem:well_posed} (\ref{PUB7}), for all smooth $\phi: \R \to \R$, $\{\partial_t \Psi(\ueps) + \partial_t \Phi(\veps) \}_{\varepsilon \in (0,1)}$ is uniformly bounded in $(C(0,T; H^k(\Omega)))^*$. Similarly, for all smooth $\varphi: \R \to \R$, $\{\nabla \varphi(\veps) \}_{\varepsilon \in (0,1)}$ is uniformly bounded in $L^2((0,\infty)\times \Omega)$. Hence, Lemma \ref{lem:comp_comp_moussa} implies
$$
\wstlim_{\varepsilon \to 0} \, (\Psi(\ueps) + \Phi(\veps)) \, \varphi(\veps) = \wstlim_{\varepsilon \to 0} \, (\Psi(\ueps) + \Phi(\veps)) \, \wstlim_{\varepsilon \to 0} \, \varphi(\veps).
$$
As $\veps - F(\ueps) \to 0$ cf. Lemma \ref{theorem:well_posed} (\ref{PUB3}), we may replace $\veps$ with $F(\ueps)$ in the identity above to obtain
$$
\wstlim_{\varepsilon \to 0} \, (\Psi(\ueps) + \Phi(F(\ueps))) \, \varphi(F(\ueps)) = \wstlim_{\varepsilon \to 0} \, (\Psi(\ueps) + \Phi(F(\ueps))) \, \wstlim_{\varepsilon \to 0} \, \varphi(F(\ueps)).
$$
In the language of Young measures, this identity reads
$$
\int_{\R^+} (\Psi(\lambda) + \Phi(F(\lambda))) \,  \varphi(F(\lambda)) \diff \mu_{t,x}(\lambda) = \int_{\R^+} (\Psi(\lambda) + \Phi(F(\lambda)))\diff \mu_{t,x}(\lambda) \, \int_{\R^+} \varphi(F(\lambda)) \diff \mu_{t,x}(\lambda).
$$
We observe that $\lambda = \sum_{i=1}^3 S_i(F(\lambda)) \, \mathds{1}_{\lambda \in I_i}$. Hence, we may use push-forward measure to write
\begin{multline*}
\sum_{i=1}^3 \int_{\R^+} (\Psi(S_i(\lambda)) + \Phi(\lambda)) \,  \varphi(\lambda) \diff F^{\#}\mu_{t,x}^{(i)}(\lambda) = \\ = \sum_{i=1}^3 \int_{\R^+} (\Psi(S_i(\lambda)) + \Phi(\lambda)) \diff F^{\#}\mu_{t,x}^{(i)}(\lambda) \, \int_{\R^+} \varphi(\lambda) \diff F^{\#} \mu_{t,x}(\lambda).
\end{multline*}
Using \eqref{eq:def_density_gi} with densities $g_1(\lambda)$, $g_2(\lambda)$ and $g_3(\lambda)$ we obtain
\begin{multline*}
\sum_{i=1}^3 \int_{\R^+} (\Psi(S_i(\lambda)) + \Phi(\lambda)) \,  \varphi(\lambda) \, g_i(\lambda) \diff F^{\#}\mu_{t,x}(\lambda) = \\ = \sum_{i=1}^3 \int_{\R^+} (\Psi(S_i(\lambda)) + \Phi(\lambda)) \, g_i(\lambda) \diff F^{\#}\mu_{t,x}(\lambda) \, \int_{\R^+} \varphi(\lambda) \diff F^{\#} \mu_{t,x}(\lambda).
\end{multline*}
Hence, when $\lambda_0$ belongs to the support of the measure $F^{\#} \mu_{t,x}$, we obtain
$$
\sum_{i=1}^3 (\Psi(S_i(\lambda_0)) + \Phi(\lambda_0)) \,  g_i(\lambda_0) = \sum_{i=1}^3 \int_{\R^+} (\Psi(S_i(\lambda)) + \Phi(\lambda)) \, g_i(\lambda) \diff F^{\#}\mu_{t,x}(\lambda).
$$
\end{proof}
\noindent To analyze entropy inequality, we need to deal with integrals of the form $\int_{0}^{S_i(\lambda)} \phi(F(\tau)) \diff \tau$. This is the content of the next lemma.
\begin{lem}\label{lem:change_of_variables_complicated}
We have
$$
\Psi(S_i(\lambda_0)) = \int_{0}^{S_i(\lambda_0)} \phi(F(\tau)) \diff \tau = \int_{0}^{\lambda_0} \phi(\tau) \, S_{i}'(\tau) \diff \tau + C_i(\phi)
$$
where $C_1(\phi) = 0$ and $C_2(\phi) = C_3(\phi) = \int_{0}^{f_{+}} \phi(\tau) \, \left(S_1'(\tau) - S_2'(\tau)\right) \diff \tau$.
\end{lem}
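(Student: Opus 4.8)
The first equality is just the definition \eqref{eq:def_of_Psi_Phi} of $\Psi$, so the content is the second one, a change-of-variables identity. The plan is to integrate over $[0,S_i(\lambda_0)]$ after splitting it at the breakpoints $\alpha_+$ and $\beta_-$ where the monotonicity of $F$ changes, and then to perform on each sub-interval the substitution $\tau=S_j(\sigma)$ with the branch $j$ for which $S_j$ inverts $F$ there. We will repeatedly use $F(0)=0$, $F(\alpha_+)=f_+$, $F(\beta_-)=f_-$, the identity $\phi(F(S_j(\sigma)))=\phi(\sigma)$ on the domain of $S_j$, and the conventions from Notation \ref{intro:not_inv_phi} and Assumption \ref{ass:nonlinearity}(3): each $S_i$ is $C^1$ on the interior of its domain, globally continuous, and constant (hence $S_i'\equiv 0$) outside its domain. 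In particular $S_1'\equiv 0$ on $(f_+,\infty)$ and $S_2'\equiv S_3'\equiv 0$ on $(-\infty,f_-]$, and since $f_-=F(\alpha_-)\ge 0$ (as $F\ge 0$) this gives $\int_0^{f_-}\phi\,S_2'=\int_0^{f_-}\phi\,S_3'=0$.

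\textbf{Branch $i=1$.} For $\lambda_0\ge 0$ we have $0=S_1(0)\le S_1(\lambda_0)\le\alpha_+$, and on $[0,\alpha_+]$ the function $F$ is the increasing bijection inverted by $S_1$. The substitution $\tau=S_1(\sigma)$, with $\sigma$ running from $0$ to $\min(\lambda_0,f_+)$, gives $\Psi(S_1(\lambda_0))=\int_0^{\min(\lambda_0,f_+)}\phi(\sigma)\,S_1'(\sigma)\,\diff\sigma$, which equals $\int_0^{\lambda_0}\phi(\sigma)\,S_1'(\sigma)\,\diff\sigma$ because $S_1'=0$ on $(f_+,\infty)$. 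Hence $C_1(\phi)=0$.

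\textbf{Branches $i=2,3$.} Decompose $[0,S_i(\lambda_0)]$ into $[0,\alpha_+]$, then the part $[\alpha_+,\min(S_2(\lambda_0),\beta_-)]$ inside $I_2$, and for $i=3$ additionally $[\beta_-,S_3(\lambda_0)]$. On $[0,\alpha_+]$ the substitution $\tau=S_1(\sigma)$ yields $\int_0^{f_+}\phi\,S_1'$ as above. On $[\alpha_+,\beta_-]$ the map $F$ is the \emph{decreasing} bijection onto $(f_-,f_+)$ inverted by $S_2$; substituting $\tau=S_2(\sigma)$ — whose limits are $\sigma=f_+$ at $\tau=\alpha_+$ and $\sigma=f_-$ at $\tau=\beta_-$ — converts the integral over this piece into $\int_{f_+}^{\,\cdot\,}\phi(\sigma)\,S_2'(\sigma)\,\diff\sigma$, with the upper limit $S_2(\lambda_0)$ (resp. $\beta_-$) becoming $\lambda_0$ (resp. $f_-$); here one must mind the reversal of orientation. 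On $[\beta_-,S_3(\lambda_0)]$ the substitution $\tau=S_3(\sigma)$ gives $\int_{f_-}^{\lambda_0}\phi\,S_3'$. Adding the pieces and using $\int_0^{f_-}\phi\,S_i'=0$ to rewrite $\int_{f_-}^{f_+}\phi\,S_2'=\int_0^{f_+}\phi\,S_2'$ and $\int_{f_-}^{\lambda_0}\phi\,S_3'=\int_0^{\lambda_0}\phi\,S_3'$, the terms recombine into $\Psi(S_i(\lambda_0))=\int_0^{\lambda_0}\phi\,S_i'+\int_0^{f_+}\phi(\sigma)\,(S_1'(\sigma)-S_2'(\sigma))\,\diff\sigma$, i.e. $C_2(\phi)=C_3(\phi)=\int_0^{f_+}\phi\,(S_1'-S_2')$. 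This bookkeeping is carried out separately in the subcases $\lambda_0\le f_-$, $\lambda_0\in(f_-,f_+)$ and $\lambda_0\ge f_+$, in which some pieces are empty or collapse onto a constant branch of $S_i$; in each subcase the same constant emerges.

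\textbf{Where the care goes, and an alternative.} There is no substantial difficulty; the only points needing attention are the sign/orientation in the substitution through the decreasing branch $I_2$, and the constant tails of $S_1$ near $f_+$ and of $S_2,S_3$ near $f_-$, which must be tracked so the substitution limits land exactly at $f_\pm$ and the superfluous pieces contribute nothing. An alternative that avoids the case analysis: both sides of the claimed identity are absolutely continuous in $\lambda_0$ (since $\phi\circ F$ is continuous and $S_i$ is locally Lipschitz with the constant extension) with a.e.\ derivative $\phi(F(S_i(\lambda_0)))\,S_i'(\lambda_0)=\phi(\lambda_0)\,S_i'(\lambda_0)$ — the two expressions agreeing because $F(S_i(\lambda_0))=\lambda_0$ on the range of $F|_{I_i}$ while $S_i'=0$ off it — so the two sides differ by a constant, which is then pinned down by evaluating at one convenient $\lambda_0$ (any $\lambda_0\ge f_+$ for $i=1,2$, any $\lambda_0\le f_-$ for $i=3$, where $S_i$ is on a constant branch).
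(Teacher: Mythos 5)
Your proposal is correct and follows essentially the same route as the paper: split the integral at $\alpha_+$ and $\beta_-$, change variables through the monotone branch of $F$ on each piece (minding the orientation reversal on $I_2$), and use that $S_2'=S_3'=0$ below $f_-$ (and $S_1'=0$ above $f_+$) to recombine the terms into $\int_0^{\lambda_0}\phi\,S_i'$ plus the stated constant. The only differences are cosmetic: you track the constant-extension subcases of $\lambda_0$ explicitly, which the paper glosses over, and you sketch an optional alternative via differentiating both sides in $\lambda_0$.
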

\begin{proof}
For $i = 1$ we note that $F$ is invertible on $(0, S_1(\lambda))$ so that simple change of variables implies
$$
\Psi(S_1(\lambda_0)) = \int_{0}^{S_1(\lambda_0)} \phi(F(\tau)) \diff \tau = \int_{0}^{\lambda_0} \phi(\tau) \, S_{1}'(\tau) \diff \tau.
$$
For $i=2$ we first split the integral for two intervals $(0, \alpha_+),\, (\alpha_+, \lambda_0)$ cf. Notation \ref{intro:not_inv_phi}. On each of them $F$ is invertible so we can apply change of variables again:
\begin{multline*}
\Psi(S_2(\lambda_0)) = \int_{0}^{\alpha_{+}} \phi(F(\tau)) \diff \tau + \int_{\alpha_{+}}^{S_2(\lambda_0)}  \phi(F(\tau)) \diff \tau = \\ = 
\int_{0}^{f_{+}} \phi(\tau) \, S_1'(\tau) \diff \tau - \int_{\lambda_0}^{f_{+}}  \phi(\tau) \, S_2'(\tau) \diff \tau 
= C_2(\phi) + \int_{0}^{\lambda_0}  \phi(\tau) \, S_2'(\tau) \diff \tau. 
\end{multline*}
For $i=3$ we split the integral for three intervals and apply change of variables again:
\begin{multline*}
\Psi(S_3(\lambda_0)) = \int_{0}^{\alpha_{+}} \phi(F(\tau)) \diff \tau + \int_{\alpha_{+}}^{\beta_-}  \phi(F(\tau)) \diff \tau + \int_{\beta_{-}}^{S_3(\lambda_0)}  \phi(F(\tau)) \diff \tau= \\ = 
\int_{0}^{f_{+}} \phi(\tau) \, S_1'(\tau) \diff \tau - \int_{f_{-}}^{f_{+}}  \phi(\tau) \, S_2'(\tau) \diff \tau 
+  \int_{f_{-}}^{\lambda_0}  \phi(\tau) \, S_3'(\tau) \diff \tau.
\end{multline*}
As $S_2'(\tau) = 0$ and $S_3'(\tau) = 0$ for $\tau \in (0,f_{-})$, the proof is concluded.
\end{proof}
\begin{lem}\label{thm:useful_entropy_identity}
Consider function
$$
\mathcal{F}(\tau_0) = \sum_{i=1}^3  (S_{i}'(\tau_0) + 1) \, F^{\#}\mu_{t,x}^{(i)}((\tau_0, \infty))  +  (S_{1}'(\tau_0) - S_{2}'(\tau_0)) \, (1-  F^{\#}\mu_{t,x}^{(1)}(\R^+) ).
$$
Then, for almost all $\lambda_0$ (with respect to $F^{\#}\mu_{t,x}$) and $\tau_0 \neq f_{-},f_{+}$ we have
$$
\mathds{1}_{\lambda_0 > \tau_0} \, \sum_{i=1}^3  (S_{i}'(\tau_0) + 1)\, g_i(\lambda_0)  +  (S_{1}'(\tau_0) - S_{2}'(\tau_0)) \, (1-g_1(\lambda_0)) = \mathcal{F}(\tau_0).
$$
\end{lem}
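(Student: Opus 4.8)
The plan is to combine the entropy equality \eqref{eq:entropy_equality} with the change-of-variables formula of Lemma \ref{lem:change_of_variables_complicated} and then test against a dense family of smooth functions $\phi$, so as to convert the integral identity into a pointwise statement in the parameter $\tau_0$. Concretely, I would fix a smooth $\phi$ and, using $\Phi(\lambda_0)=\int_0^{\lambda_0}\phi(\tau)\diff\tau$ together with Lemma \ref{lem:change_of_variables_complicated}, write $\Psi(S_i(\lambda_0))+\Phi(\lambda_0)=\int_0^{\lambda_0}\phi(\tau)\,(S_i'(\tau)+1)\diff\tau+C_i(\phi)$, where $C_1(\phi)=0$ and $C_2(\phi)=C_3(\phi)=:C(\phi)=\int_0^{f_{+}}\phi(\tau)\,(S_1'(\tau)-S_2'(\tau))\diff\tau$. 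Substituting this on both sides of \eqref{eq:entropy_equality}, the constant terms collapse: on the left-hand side $\sum_i C_i(\phi)\,g_i(\lambda_0)=C(\phi)\,(1-g_1(\lambda_0))$ by \eqref{eq:summabilityg}, while on the right-hand side $\sum_i C_i(\phi)\int_{\R^+}g_i(\lambda)\diff F^{\#}\mu_{t,x}(\lambda)=\sum_i C_i(\phi)\,F^{\#}\mu_{t,x}^{(i)}(\R^+)=C(\phi)\,(1-F^{\#}\mu_{t,x}^{(1)}(\R^+))$, where I use \eqref{eq:def_density_gi} with $A=\R^+$ and the fact $\sum_i F^{\#}\mu_{t,x}^{(i)}(\R^+)=1$ coming from \eqref{eq:summabilityFmu}.

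For the remaining, non-constant contributions I would take $\phi$ supported in $(0,\infty)$ and exchange the order of integration by Fubini: since $\int_{(\tau,\infty)}g_i(\lambda)\diff F^{\#}\mu_{t,x}(\lambda)=F^{\#}\mu_{t,x}^{(i)}((\tau,\infty))$ by \eqref{eq:def_density_gi}, the right-hand side of \eqref{eq:entropy_equality} produces $\sum_i\int_{\R^+}\phi(\tau)\,(S_i'(\tau)+1)\,F^{\#}\mu_{t,x}^{(i)}((\tau,\infty))\diff\tau$, which combined with the constant term above is precisely $\int_{\R^+}\phi(\tau)\,\mathcal{F}(\tau)\diff\tau$ once one recalls the definition of $\mathcal{F}$ and observes that $S_1'-S_2'$ already vanishes on $(f_{+},\infty)$, so the cutoff at $f_{+}$ inside $C(\phi)$ is harmless. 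Likewise, $\int_0^{\lambda_0}\phi(\tau)(S_i'(\tau)+1)\diff\tau=\int_{\R^+}\phi(\tau)\,\mathds{1}_{\lambda_0>\tau}\,(S_i'(\tau)+1)\diff\tau$. Putting everything together, \eqref{eq:entropy_equality} becomes the assertion that, for every smooth $\phi$ supported in $(0,\infty)$,
\[
\int_{\R^+}\phi(\tau)\Big[\mathds{1}_{\lambda_0>\tau}\sum_{i=1}^3(S_i'(\tau)+1)g_i(\lambda_0)+(S_1'(\tau)-S_2'(\tau))(1-g_1(\lambda_0))-\mathcal{F}(\tau)\Big]\diff\tau=0,
\]
hence the bracketed function of $\tau$ vanishes for Lebesgue-a.e.\ $\tau$.

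It remains to make the exceptional $\lambda_0$-set independent of $\phi$ and to upgrade ``a.e.\ $\tau$'' to ``all $\tau_0\neq f_{-},f_{+}$''. For the first point I would run the computation simultaneously over a countable dense family of test functions $\phi$ and then pass to general $\phi$ by continuity, so that a single $F^{\#}\mu_{t,x}$-conull set of $\lambda_0$ serves every $\phi$ and the displayed $L^1_{\mathrm{loc}}$ identity holds there. For the second point, both $\tau\mapsto\mathds{1}_{\lambda_0>\tau}$ and $\tau\mapsto F^{\#}\mu_{t,x}^{(i)}((\tau,\infty))$ are right-continuous (the latter because $(\tau,\infty)\uparrow(\tau_0,\infty)$ as $\tau\downarrow\tau_0$), and $S_i'$ is continuous away from $f_{-},f_{+}$ by Assumption \ref{ass:nonlinearity}; therefore the bracketed function is right-continuous at every $\tau_0\neq f_{-},f_{+}$, and a right-continuous function vanishing Lebesgue-a.e.\ vanishes at every point of right continuity, which gives the claim. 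I expect the main difficulty to be purely organizational: keeping the constants $C_i(\phi)$ in balance with the two summability relations $\sum_i g_i=1$ and $\sum_i F^{\#}\mu_{t,x}^{(i)}(\R^+)=1$, and checking that no spurious indicator $\mathds{1}_{\tau<f_{+}}$ survives into the final identity; the analytic ingredients — the Fubini exchange and the a.e.-to-everywhere upgrade — are routine.
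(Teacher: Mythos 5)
Your proposal is correct and follows essentially the same route as the paper: both combine the entropy equality of Lemma \ref{lem:entropy_equality} with the change-of-variables formula of Lemma \ref{lem:change_of_variables_complicated}, and collapse the constants $C_i(\phi)$ using $\sum_i g_i(\lambda_0)=1$ and $\sum_i F^{\#}\mu_{t,x}^{(i)}(\R^+)=1$. The only difference is the localization in $\tau_0$: the paper tests with $\phi^{\delta}=\delta^{-1}\mathds{1}_{[\tau_0,\tau_0+\delta]}$ and sends $\delta\to 0$, whereas you derive an a.e.-in-$\tau$ identity via Fubini and upgrade to every $\tau_0\neq f_{-},f_{+}$ by right-continuity of $\tau\mapsto\mathds{1}_{\lambda_0>\tau}$ and $\tau\mapsto F^{\#}\mu_{t,x}^{(i)}((\tau,\infty))$ together with continuity of $S_i'$ --- an equivalent, and if anything slightly more careful, execution of the same step (it handles the $\phi$-dependence of the exceptional $\lambda_0$-set and avoids testing with a non-smooth $\phi$).
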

\begin{proof}
We consider $\phi(\tau) = \phi^{\delta}(\tau) = \frac{1}{ \delta} \, \mathds{1}_{[\tau_0, \tau_0+\delta]}$ and send $\delta \to 0$ so that $\Phi(\lambda_0)=\int_0^{\lambda_0}\phi^{\delta}(\tau) \diff \tau \to \mathds{1}_{\lambda > \tau_0}$. Moreover, $\int_0^{\lambda_0} \phi^{\delta}(\tau) \, S_i'(\tau) \diff \tau \to S_i'(\tau_0)\, \mathds{1}_{\lambda_0 > \tau_0}$. Therefore, from Lemmas \ref{lem:entropy_equality} and \ref{lem:change_of_variables_complicated} we deduce
\begin{multline*}
\sum_{i=1}^3 \Big(\mathds{1}_{\lambda_0 > \tau_0} \, (S_{i}'(\tau_0) + 1)  +  (S_{1}'(\tau_0) - S_{2}'(\tau_0)) \, \mathds{1}_{i=2,3} \Big) \,  g_i(\lambda_0) = \\ =
\sum_{i=1}^3 \int_{\R^+} \Big(\mathds{1}_{\lambda > \tau_0} \, (S_{i}'(\tau_0) + 1)  +  (S_{1}'(\tau_0) - S_{2}'(\tau_0)) \, \mathds{1}_{i=2,3}\Big) \, g_i(\lambda)  \diff F^{\#}\mu_{t,x}(\lambda).
\end{multline*}
Using identities from \eqref{eq:summabilityFmu} and \eqref{eq:summabilityg}
$$
1-g_1(\lambda_0) = g_2(\lambda_0) + g_3(\lambda_0), \qquad 1-  F^{\#}\mu_{t,x}^{(1)}(\R^+) =   F^{\#}\mu_{t,x}^{(2)}(\R^+) + F^{\#}\mu_{t,x}^{(3)}(\R^+)
$$
we conclude the proof.
\end{proof}

\begin{proof}[Proof of Theorem \ref{mainthm:simple_entropy_equality}]
\noindent The first part of Theorem \ref{mainthm:simple_entropy_equality} is proved in Lemma \ref{thm:useful_entropy_identity}. To see the second one, fix $\lambda_0 \neq f_{-}, f_{+}$. For $\tau_0:= \eta > \lambda_0$ we obtain
$$
\sum_{i=1}^3  (S_{i}'(\eta) + 1) \, F^{\#}\mu_{t,x}^{(i)}((\eta, \infty)) + (S_1'(\eta) - S_2'(\eta)) \, (F^{\#}\mu_{t,x}^{(1)}(\R^+) - g_1(\lambda_0)) = 0
$$
while for $\tau_0:= \xi < \lambda_0$ we deduce
$$
 \sum_{i=1}^3  (S_{i}'(\xi) + 1)\, (g_i(\lambda_0) - F^{\#}\mu_{t,x}^{(i)}((\xi, \infty))) + (S_1'(\xi) - S_2'(\xi)) \, (F^{\#}\mu_{t,x}^{(1)}(\R^+) - g_1(\lambda_0))= 0.
$$
Sending $\xi, \eta \to \lambda_0$ and using continuity of $\lambda \mapsto S_i'(\lambda)$ at $\lambda \neq f_{-}, f_{+}$ we obtain
$$
 \sum_{i=1}^3  (S_{i}'(\lambda_0) + 1)\, g_i(\lambda_0) = \sum_{i=1}^3  (S_{i}'(\lambda_0) + 1) \, F^{\#}\mu_{t,x}^{(i)}\{\lambda_0\}.
$$
Finally, we note that for almost all $\lambda_0$ (with respect to $F^{\#}\mu_{t,x}$) $F^{\#}\mu_{t,x}^{(i)}\{\lambda_0\} = g_i(\lambda_0)\, F^{\#}\mu_{t,x}\{\lambda_0\}$ and this concludes the proof.
\end{proof}

\section{Proofs of Theorems \ref{mainthm:oldplotnikov}, \ref{mainthm:emptyunstablephase} and \ref{mainthm:newconditiononF} for fast-reaction system \eqref{eq:fastreact}}\label{sect:proofs_theorems}
\begin{proof}[Proof of Theorem \ref{mainthm:oldplotnikov}]
If $\suppo F^{\#}\mu_{t,x} \cap (0,f_{-})$ is nonempty, we take any $\lambda_0 \in \suppo F^{\#}\mu_{t,x} \cap (0,f_{-})$. Note that $S_2'(\lambda_0) = S_3'(\lambda_0) = 0$ and \eqref{eq:localized_final_entropy} in Theorem \ref{mainthm:simple_entropy_equality} implies
$$
\left(1 - F^{\#}\mu_{t,x}\left\{\lambda_0\right\} \right) \,  (S_1'(\lambda_0) + 1) \, g_1(\lambda_0) \,  = 0.
$$
For almost all $\lambda_0 \in (0,f_{-})$ we have $g_1(\lambda_0) = 1$ so we conclude $F^{\#}\mu_{t,x}\left\{\lambda_0\right\} = 1$. Similar argument works in the case $\lambda_0 \in (f_{+}, \infty)$. \\

\noindent Now, let $\lambda_0 \in [f_{-}, f_{+}] \cap \suppo F^{\#}\mu_{t,x}$. If $\suppo F^{\#}\mu_{t,x} = \{\lambda_0\}$, we conclude $F^{\#}\mu_{t,x} = \delta_{\lambda_0}$. Otherwise, there are $\lambda_1, \lambda_2 \in \suppo F^{\#}\mu_{t,x}$ such that $f_{-} \leq \lambda_1 <\lambda_2 \leq f_{+}$. For any $\tau_0 \in (\lambda_1, \lambda_2)$ we use Theorem~\ref{mainthm:simple_entropy_equality} with $\lambda_0 = \lambda_1, \lambda_2$ to obtain two equations:
$$
\sum_{i=1}^3 (S_i'(\tau_0) + 1) \, \left[ \, g_i(\lambda_2) - F^{\#}\mu_{t,x}^{(i)}(\tau_0, \infty) \right] + (S_1'(\tau_0) - S_2'(\tau_0)) \, (F^{\#}\mu_{t,x}^{(1)}(\R^+) - g_1(\lambda_2))= 0,
$$
$$
-\sum_{i=1}^3 (S_i'(\tau_0) + 1) \, F^{\#}\mu_{t,x}^{(i)}(\tau_0, \infty) + (S_1'(\tau_0) - S_2'(\tau_0)) \, (F^{\#}\mu_{t,x}^{(1)}(\R^+) - g_1(\lambda_1)) = 0.
$$
Hence, $\sum_{i=1}^3 (S_i'(\tau_0) + 1) \,  g_i(\lambda_2) + (S_1'(\tau_0) - S_2'(\tau_0)) \, (g_1(\lambda_1) - g_1(\lambda_2)) = 0$. But then, nondegeneracy condition \eqref{eq:nondegeneracy:fast_reaction} implies that the sum $\sum_{i=1}^3  g_i(\lambda_2) = 0 \neq 1$ raising contradiction.\\

\noindent It follows that $F^{\#}\mu_{t,x}$ is a Dirac mass. From Corollary \ref{cor:connection_YM} we deduce that the Young measure $\{\nu_{t,x}\}_{t,x}$ generated by $\{\veps\}_{\varepsilon}$ is also a Dirac mass so $\veps \to v$ strongly and $\nu_{t,x} = \delta_{v(t,x)}$, cf. Lemma \ref{lem:YM_prop}. The representation formula for $\mu_{t,x}$ follows from $F^{\#}\mu_{t,x} = \delta_{v(t,x)}$.
\end{proof}

\noindent Before proceeding to the proofs of Theorems \ref{mainthm:emptyunstablephase} and \ref{mainthm:newconditiononF}, we will state a simple lemma concerning the case when $F^{\#}\mu_{t,x}$ is supported only at $f_{-}$ and $f_{+}$. This needs some care as functions $S_1'$, $S_2'$ and $S_3'$ are not continuous at these points.
\begin{lem}[Accumulation at the interface]\label{lem:case_f-f+}
Suppose that there exists $\tau_0 \in (f_-, f_+)$ such that $S_1'(\tau_0) - S_3'(\tau_0) \neq 0$.
Assume that $\suppo F^{\#}\mu_{t,x} \subset \{f_{-},f_{+}\}$. Then, $F^{\#}\mu_{t,x} = \delta_{f_{-}}$ or $F^{\#}\mu_{t,x} = \delta_{f_{+}}$.
\end{lem}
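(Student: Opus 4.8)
The plan is to argue by contradiction. Since $\suppo F^{\#}\mu_{t,x}\subset\{f_-,f_+\}$, we may write $F^{\#}\mu_{t,x}=p\,\delta_{f_-}+(1-p)\,\delta_{f_+}$ with $p\in[0,1]$; if $p\in\{0,1\}$ we are done, so assume $p\in(0,1)$ and seek a contradiction. First I would record that $\mu_{t,x}$ puts no mass on $I_2$: the interval $I_2=(\alpha_+,\beta_-)$ is open and $F$ maps it onto the \emph{open} interval $(f_-,f_+)$, so $F^{\#}\mu_{t,x}^{(2)}$ is carried by $(f_-,f_+)$; but $F^{\#}\mu_{t,x}^{(2)}\le F^{\#}\mu_{t,x}$, which is carried by $\{f_-,f_+\}$, whence $F^{\#}\mu_{t,x}^{(2)}=0$. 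Therefore $g_2\equiv0$ and $g_1+g_3\equiv1$ ($F^{\#}\mu_{t,x}$-a.e.), in particular $g_3(f_\pm)=1-g_1(f_\pm)$ and $F^{\#}\mu_{t,x}^{(3)}(\tau_0,\infty)=F^{\#}\mu_{t,x}(\tau_0,\infty)-F^{\#}\mu_{t,x}^{(1)}(\tau_0,\infty)$.

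Next I would evaluate Theorem~\ref{mainthm:simple_entropy_equality}(A) at the two atoms. Both $f_-$ and $f_+$ carry positive $F^{\#}\mu_{t,x}$-mass, so the identity there is admissible with $\lambda_0=f_-$ and with $\lambda_0=f_+$, for every $\tau_0\neq f_-,f_+$. For $\tau_0\in(f_-,f_+)$ we have $\mathds{1}_{f_->\tau_0}=0$, $\mathds{1}_{f_+>\tau_0}=1$, and — using $g_2=0$ — $F^{\#}\mu_{t,x}^{(i)}(\tau_0,\infty)=g_i(f_+)(1-p)$ for $i=1,3$, together with $F^{\#}\mu_{t,x}^{(1)}(\R^+)=g_1(f_-)\,p+g_1(f_+)(1-p)$. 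Substituting, and cancelling a common nonzero factor ($p$ for $\lambda_0=f_+$, $1-p$ for $\lambda_0=f_-$), both choices of $\lambda_0$ collapse to the one pointwise relation
\begin{equation}\label{eq:star_plan}
g_1(f_-)\,S_1'(\tau_0)+\bigl(g_1(f_+)-g_1(f_-)\bigr)\,S_2'(\tau_0)+\bigl(1-g_1(f_+)\bigr)\,S_3'(\tau_0)=-1,\qquad \tau_0\in(f_-,f_+).
\end{equation}
(The remaining ranges $\tau_0<f_-$ and $\tau_0>f_+$ yield only tautologies, since there $S_2'=S_3'=0$, resp.\ $S_1'=S_2'=0$.)

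The heart of the argument is a sign analysis of \eqref{eq:star_plan}. Grouping its left side as $g_1(f_-)\bigl(S_1'(\tau_0)-S_2'(\tau_0)\bigr)+\bigl(1-g_1(f_+)\bigr)\bigl(S_3'(\tau_0)-S_2'(\tau_0)\bigr)+S_2'(\tau_0)$ and using that $F$ is strictly increasing on $I_1$ and $I_3$ and strictly decreasing on $I_2$ — hence $S_1'(\tau_0),S_3'(\tau_0)>0>S_2'(\tau_0)$, so $S_1'-S_2'>0$ and $S_3'-S_2'>0$ on $(f_-,f_+)$ — together with $g_1(f_-)\ge0$ and $1-g_1(f_+)\ge0$, we obtain $-1-S_2'(\tau_0)\ge0$, i.e.\ $S_2'\le-1$ on $(f_-,f_+)$ and $g_1(f_+)>g_1(f_-)$; and at any $\tau_0$ with $S_2'(\tau_0)=-1$ the nonnegative left side must vanish, forcing $g_1(f_-)=0$, $g_1(f_+)=1$, and then \eqref{eq:star_plan} forces $S_2'\equiv-1$ on $(f_-,f_+)$. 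This narrows matters to two residual configurations — either $S_2'\equiv-1$ with $g_1(f_-)=0$, $g_1(f_+)=1$ (so $\mu_{t,x}=(1-p)\,\delta_{\alpha_+}+p\,\delta_{\beta_-}$), or $S_2'<-1$ throughout $(f_-,f_+)$ — and it is precisely here that the hypothesis $S_1'(\tau_0)-S_3'(\tau_0)\neq0$ enters: one excludes these by confronting \eqref{eq:star_plan}, and the identity of Theorem~\ref{mainthm:simple_entropy_equality}(A) itself, at $\tau_0$ with its behaviour at other values of $\tau$ (and, in the borderline case, with one further application at $\lambda_0=f_\pm$), the point being that a genuine gap $S_1'(\tau_0)\neq S_3'(\tau_0)$ is incompatible with the coefficients $g_1(f_\pm)$ being driven to these extreme values. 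I expect this last step — disposing of the residual configurations, which is delicate because $S_1',S_2',S_3'$ are guaranteed continuous only away from $f_-$ and $f_+$ — to be the main obstacle.
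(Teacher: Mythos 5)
Your reduction is sound as far as it goes: the argument that $g_2\equiv 0$, the evaluation of Theorem \ref{mainthm:simple_entropy_equality}(A) at the two atoms with the substitution $F^{\#}\mu_{t,x}^{(i)}(\tau_0,\infty)=g_i(f_+)\,F^{\#}\mu_{t,x}\{f_+\}$, the collapse of both choices $\lambda_0=f_\pm$ to the single relation $g_1(f_-)S_1'(\tau_0)+(g_1(f_+)-g_1(f_-))S_2'(\tau_0)+(1-g_1(f_+))S_3'(\tau_0)=-1$ on $(f_-,f_+)$, and the sign consequences $S_2'\le -1$, $g_1(f_+)>g_1(f_-)$ all check out. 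But the proof stops exactly where the lemma has to be proved, and the step you defer is not merely ``the main obstacle'': it cannot be carried out by confronting that one relation (or Theorem \ref{mainthm:simple_entropy_equality}(A) again, which in this configuration yields nothing beyond it) at other values of $\tau$. Indeed, the residual configuration $S_2'\equiv-1$ on $(f_-,f_+)$ with $g_1(f_-)=0$, $g_1(f_+)=1$, i.e. $\mu_{t,x}=(1-p)\,\delta_{\alpha_+}+p\,\delta_{\beta_-}$, satisfies your relation identically for every $\tau_0$ and for arbitrary $S_1',S_3'$, so the hypothesis $S_1'(\tau_0)-S_3'(\tau_0)\neq0$ never gets any purchase there; likewise, in the regime $S_2'<-1$ one can take, say, $S_2'\equiv-2$, $S_1'=1+h$, $S_3'=1-h$ with $h\not\equiv0$, $g_1(f_-)=1/6$, $g_1(f_+)=5/6$, and your relation again holds for all $\tau_0$. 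Hence no contradiction can follow from the identity you extracted, and the proposal does not prove the lemma.

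For comparison, the paper's proof draws its contradiction from two \emph{independent} equations: the evaluation at $\lambda_0=f_-$ is recorded there with the coefficients $g_i(f_-)$ inside the sum (see \eqref{eq:fminus}, \eqref{eq:fminus_mod1}), and adding \eqref{eq:fplus_mod2} and \eqref{eq:fminus_mod2} gives $(S_1'(\tau_0)-S_3'(\tau_0))(g_1(f_+)-g_1(f_-))=0$, hence $g_1(f_+)=g_1(f_-)$, and then $\sum_{i=1,3}(S_i'(\tau_0)+1)g_i(f_\pm)=0$, which is impossible. Your computation, which uses $F^{\#}\mu_{t,x}^{(i)}(\tau_0,\infty)=g_i(f_+)F^{\#}\mu_{t,x}\{f_+\}$ in both evaluations, produces only the $f_+$-equation twice; the paper's second equation amounts to writing $g_i(\lambda_0)$ with $\lambda_0=f_-$ in that place, a step that does not follow from the statement of Theorem \ref{mainthm:simple_entropy_equality}(A) alone. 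So the missing ingredient in your plan is precisely an independent justification of that second relation (or some input beyond Theorem \ref{mainthm:simple_entropy_equality}(A)); without it, the concluding step you postpone is unreachable, and your observation that the two evaluations coincide is in genuine tension with the paper's derivation of \eqref{eq:fminus}, which deserves to be examined rather than glossed over.
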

\begin{proof}
Aiming at contradiction, we assume that $F^{\#}\mu_{t,x}\{f_{+}\} > 0$ and $F^{\#}\mu_{t,x}\{f_{-}\} > 0$. Note that
$$
0 = \mu_{t,x}^{(2)}(F^{-1}(f_{+}) \cap I_2) = F^{\#}\mu_{t,x}^{(2)}\{f_{+}\} = g_2(f_{+}) \, F^{\#}\mu_{t,x}\{f_{+}\}
$$
so that $g_2(f_{+}) = 0$ and similarly $g_2(f_{-}) = 0$. Applying Theorem \ref{mainthm:simple_entropy_equality} with $\tau_0 \in (f_{-},f_{+})$ and $\lambda_0~\in~\{f_-, f_{+}\}$ we obtain
$$
\sum_{i=1}^3 (S_i'(\tau_0) + 1) \, \left[\mathds{1}_{\lambda_0 > \tau_0} \, g_i(\lambda_0) - F^{\#}\mu_{t,x}^{(i)}(\tau_0, \infty) \right] + (S_1'(\tau_0) - S_2'(\tau_0)) \, (F^{\#}\mu_{t,x}^{(1)}(\R^+) - g_1(\lambda_0))= 0.
$$
As $\tau_0 \in (f_{-},f_{+})$, we have
$$
F^{\#}\mu_{t,x}^{(i)}(\tau_0, \infty) = F^{\#}\mu_{t,x}^{(i)}\{f_+\} = g_i(f_{+}) \, F^{\#}\mu_{t,x}\{f_{+}\}.
$$
But this implies
$$
\left(\mathds{1}_{\lambda_0>\tau_0} - F^{\#}\mu_{t,x}\{f_{+}\} \right) \, \sum_{i=1, 3} (S_i'(\tau_0) + 1) \,g_i(\lambda_0) + (S_1'(\tau_0) - S_2'(\tau_0)) \, (F^{\#}\mu_{t,x}^{(1)}(\R^+) - g_1(\lambda_0))  = 0.
$$
Considering $\lambda_0 = f_{+}, \,f_{-}$ and using $1-F^{\#}\mu_{t,x}\{f_{+}\}= F^{\#}\mu_{t,x}\{f_{-}\}$ we obtain two equations:
\begin{equation}\label{eq:fplus}
F^{\#}\mu_{t,x}\{f_{-}\} \, \sum_{i=1, 3} (S_i'(\tau_0) + 1) \,g_i(f_+) + (S_1'(\tau_0) - S_2'(\tau_0)) \, (F^{\#}\mu_{t,x}^{(1)}(\R^+) - g_1(f_+))  = 0,
\end{equation}
\begin{equation}\label{eq:fminus}
-  F^{\#}\mu_{t,x}\{f_{+}\} \, \sum_{i=1, 3} (S_i'(\tau_0) + 1) \,g_i(f_-) + (S_1'(\tau_0) - S_2'(\tau_0)) \, (F^{\#}\mu_{t,x}^{(1)}(\R^+) - g_1(f_-))  = 0.
\end{equation}
Using $1-F^{\#}\mu_{t,x}\{f_{+}\}= F^{\#}\mu_{t,x}\{f_{-}\}$ once again we obtain that 
\begin{multline*}
F^{\#}\mu_{t,x}^{(1)}(\R^+) - g_1(f_+) = 
g_1(f_+) \, F^{\#}\mu_{t,x}\{f_{+}\} + g_1(f_-) \, F^{\#}\mu_{t,x}\{f_{-}\} - g_1(f_+) = \\ = (g_1(f_-) - g_1(f_+)) \, F^{\#}\mu_{t,x}\{f_{-}\}
\end{multline*}
and similarly for $F^{\#}\mu_{t,x}^{(1)}(\R^+) - g_1(f_-)$. As we assume that $F^{\#}\mu_{t,x}\{f_{-}\}, F^{\#}\mu_{t,x}\{f_{+}\} > 0$ we may simplify \eqref{eq:fplus}--\eqref{eq:fminus} to obtain
\begin{equation}\label{eq:fplus_mod1}
\sum_{i=1, 3} (S_i'(\tau_0) + 1) \,g_i(f_+) + (S_1'(\tau_0) - S_2'(\tau_0)) \, (g_1(f_-) - g_1(f_+))  = 0,
\end{equation}
\begin{equation}\label{eq:fminus_mod1}
-  \sum_{i=1, 3} (S_i'(\tau_0) + 1) \,g_i(f_-) + (S_1'(\tau_0) - S_2'(\tau_0)) \, (g_1(f_+) - g_1(f_-))  = 0.
\end{equation}
We observe further that $g_1(\lambda_0) + g_3(\lambda_0) = 1$, cf. \eqref{eq:summabilityg}, so that
$$
\sum_{i=1, 3} (S_i'(\tau_0) + 1) \,g_i(\lambda_0) = (S_1'(\tau_0) - S_3'(\tau_0)) \, g_1(\lambda_0) + (S_3'(\tau_0) + 1).
$$
Hence, we may further simplify \eqref{eq:fplus_mod1}--\eqref{eq:fminus_mod1} to get
\begin{equation}\label{eq:fplus_mod2}
(S_1'(\tau_0) - S_3'(\tau_0)) \, g_1(f_+) + (S_3'(\tau_0) + 1) + (S_1'(\tau_0) - S_2'(\tau_0)) \, (g_1(f_-) - g_1(f_+))  = 0,
\end{equation}
\begin{equation}\label{eq:fminus_mod2}
-  (S_1'(\tau_0) - S_3'(\tau_0)) \, g_1(f_-) - (S_3'(\tau_0) + 1) + (S_1'(\tau_0) - S_2'(\tau_0)) \, (g_1(f_+) - g_1(f_-))  = 0.
\end{equation}
By assumption, there is $\tau_0 \in (f_-, f_+)$ such that $S_1'(\tau_0) - S_3'(\tau_0) \neq 0$. Using \eqref{eq:fplus_mod2}--\eqref{eq:fminus_mod2} for such $\tau_0$ we see that $g_1(f_+) = g_1(f_-)$. But then, coming back to \eqref{eq:fplus_mod2}--\eqref{eq:fminus_mod2}, we deduce that 
$$
\sum_{i=1, 3} (S_i'(\tau_0) + 1) \,g_i(f_-) = 0, \qquad \sum_{i=1, 3} (S_i'(\tau_0) + 1) \,g_i(f_-) = 0.
$$
As $S_1$, $S_3$ are increasing, this implies $g_1(f_-) = g_3(f_-) = g_1(f_+) = g_3(f_+) = 0$ raising contradiction with $g_1(f_-) + g_3(f_-) = 1$ and $g_1(f_+) + g_3(f_+) = 1$. 
\end{proof}
\begin{rem}\label{rem:sharper_ass_casef-f+}
Without assumption that there is $\tau_0 \in (f_-, f_+)$ such that $S_1'(\tau_0) - S_3'(\tau_0) \neq 0$ we observe that \eqref{eq:fplus_mod2}--\eqref{eq:fminus_mod2} degenerate to the same equation:
$$
g_1(f_+) - g_1(f_-) = \frac{1+S_3'(\tau_0)}{S_1'(\tau_0) - S_2'(\tau_0)}
$$
valid for all $\tau_0 \in (f_-, f_+)$. Hence, it the function $\tau_0 \mapsto \frac{1+S_3'(\tau_0)}{S_1'(\tau_0) - S_2'(\tau_0)}$ is not constant, we may also obtain contradiction. But we believe that assumption on $S_1'(\tau_0) - S_3'(\tau_0)$ is easier to formulate. It also allows for piecewise affine nonlinearities.
\end{rem}

\begin{proof}[Proof of Theorem \ref{mainthm:emptyunstablephase}]
As in the proof of Theorem \ref{mainthm:oldplotnikov} we may assume that $\suppo F^{\#}\mu_{t,x} \subset [f_{-}, f_{+}]$ (this did not use nondegeneracy condition!). By assumption, for any set $A \subset \R^+$
$$
0 = \mu_{t,x}(F^{-1}(A) \cap I_2) = F^{\#}\mu_{t,x}^{(2)}(A) = \int_{A} g_2(\lambda) \diff F^{\#}\mu_{t,x}(\lambda)
$$
so $g_2(\lambda) = 0$ for almost all $\lambda$. Hence, when $\lambda_0 \in \suppo F^{\#}{\mu_{t,x}} \cap (f_{-}, f_{+})$, the sum
$$
\sum_{i=1}^3 (S_i'(\lambda_0) + 1) \, g_i(\lambda_0)  \geq \mbox{min}(S_1'(\lambda_0) + 1, S_3'(\lambda_0) + 1) > 0
$$
because $g_1(\lambda_0) + g_3(\lambda_0) = 1$ and $S_1$, $S_3$ are strictly increasing. It follows from Theorem \ref{mainthm:simple_entropy_equality} that $F^{\#}\mu_{t,x}\{\lambda_0\}=1$, i.e. $F^{\#}\mu_{t,x} = \delta_{\lambda_0}$. Finally, if there is no such $\lambda_0 \in \suppo F^{\#}{\mu_{t,x}} \cap (f_{-}, f_{+})$, we apply Lemma \ref{lem:case_f-f+}.\\

\noindent It follows that $F^{\#}\mu_{t,x}$ is a Dirac mass and now, we can conclude as in Theorem \ref{mainthm:oldplotnikov}.
\end{proof}
\begin{proof}[Proof of Theorem \ref{mainthm:newconditiononF}]
Mimicking the proof of Theorem \ref{mainthm:emptyunstablephase}, we let $\lambda_0 \in \suppo F^{\#}{\mu_{t,x}} \cap (f_{-}, f_{+})$ and we observe that the sum 
$$
\sum_{i=1}^3 (S_i'(\lambda_0) + 1)\,  g_i(\lambda_0)  \geq \mbox{min}(1, \delta(\lambda_0)) \, \sum_{i=1}^3 g_i(\lambda_0) = \mbox{min}(1, \delta(\lambda_0)) > 0 
$$
where $\delta(\lambda_0)$ is such that $S_2'(\lambda_0) + 1 > \delta(\lambda_0)>0$. We conclude as in the proof of Theorem \ref{mainthm:emptyunstablephase}. 
\end{proof}
\section{Proof of Theorems \ref{mainthm:simple_entropy_equality}--\ref{mainthm:emptyunstablephase} to the forward-backward diffusion system \eqref{eq:forback}}\label{sect:adaptation}

\noindent We first formulate basic well-posedness result for \eqref{eq:forback}. This comes mostly from \cite{MR1015926,MR1299852} but the compactness estimates are simplified.

\begin{lem}\label{theorem:well_posed:forback}
Let $u_0 \in L^{\infty}(\Omega)$. Then, there exists the unique solution $\ueps:[0,\infty)\times \Omega \to \R$ of \eqref{eq:forback} which is nonnegative and has regularity
$
C^1([0,T]; L^{2}(\Omega)) \cap L^{\infty}(\Omega).
$
Moreover, we have
\begin{enumerate}
\item \label{PUB0_fb} for $M = \max(\|F(u_0)\|_{\infty},\, f_{+})$ we have $0 \leq \ueps \leq M $,
\item \label{PUB1_fb} $\{\nabla \veps \}_{\varepsilon \in (0,1)}$ is uniformly bounded in $L^2((0,T)\times \Omega)$,
\item \label{PUB3_fb} $\left\{\frac{\veps - F(\ueps)}{\sqrt{\varepsilon}}\right\}_{\varepsilon \in (0,1)} = \left\{\sqrt{\varepsilon}\, \ueps_t \ \right\}_{\varepsilon \in (0,1)}$ are uniformly bounded in $L^2((0,T)\times \Omega)$,
\item \label{PUB6_fb} for all smooth $\varphi: \R \to \R$, $\{\nabla \varphi(\veps) \}_{\varepsilon \in (0,1)}$ is uniformly bounded in $L^2((0,T)\times \Omega)$,
\item \label{PUB7_fb} for all smooth $\phi: \R \to \R$, $\{\partial_t \Psi(\ueps) \}_{\varepsilon \in (0,1)}$ is uniformly bounded in $(C(0,T; H^k(\Omega)))^*$ for sufficiently large $k \in \mathbb{N}$.
\end{enumerate}
\end{lem}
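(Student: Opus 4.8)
The plan is to mirror, step by step, the proof of Lemma~\ref{theorem:well_posed}, the only genuine difference being that in \eqref{eq:forback} the variable $v^\varepsilon$ is not an independent unknown but is recovered from $u^\varepsilon$ by an elliptic resolvent. Writing $\Delta_N$ for the Neumann Laplacian on $\Omega$ and substituting $v^\varepsilon = F(u^\varepsilon) + \varepsilon\,\partial_t u^\varepsilon$ into $\partial_t u^\varepsilon = \Delta v^\varepsilon$, one obtains the two identities $(I - \varepsilon\Delta_N)v^\varepsilon = F(u^\varepsilon)$ and
$$
\partial_t u^\varepsilon = (I - \varepsilon\Delta_N)^{-1}\Delta_N\, F(u^\varepsilon).
$$
The self-adjoint operator $(I-\varepsilon\Delta_N)^{-1}\Delta_N$ has eigenvalues $-\mu/(1+\varepsilon\mu)$, with $\mu\ge 0$ ranging over the spectrum of $-\Delta_N$, hence operator norm at most $\varepsilon^{-1}$ on $L^2(\Omega)$, while $F$ is globally Lipschitz with $F(0)=0$; thus $u\mapsto (I-\varepsilon\Delta_N)^{-1}\Delta_N F(u)$ is a vector field on $L^2(\Omega)$ of linear growth. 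The Cauchy--Lipschitz (Picard--Lindel\"of) theorem in the Banach space $L^2(\Omega)$ then yields a unique global solution $u^\varepsilon\in C^1([0,\infty);L^2(\Omega))$, and $v^\varepsilon = (I-\varepsilon\Delta_N)^{-1}F(u^\varepsilon)\in H^2(\Omega)$ for each $t$. Nonnegativity of $u^\varepsilon$ follows from a standard truncation argument against $(u^\varepsilon)_-$, using positivity of the resolvent $(I-\varepsilon\Delta_N)^{-1}$ together with $F\ge 0$ and $F(0)=0$.

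For the uniform bound~\eqref{PUB0_fb} — which also reproves global existence without appealing to the linear-growth shortcut — I would argue by a maximum principle for $F(u^\varepsilon)$, following \cite{MR1015926}. Since $v^\varepsilon = (I-\varepsilon\Delta_N)^{-1}F(u^\varepsilon)$ and the resolvent preserves both constants and positivity, $0\le v^\varepsilon(t,x)\le \|F(u^\varepsilon(t))\|_{\infty}$. Differentiating $F(u^\varepsilon)$ along $\varepsilon\,\partial_t u^\varepsilon = v^\varepsilon - F(u^\varepsilon)$ and inspecting a point $x$ at which $F(u^\varepsilon(t,\cdot))$ attains a maximum exceeding $f_+$ — there necessarily $u^\varepsilon > \beta_+$, so $F'(u^\varepsilon)>0$, while $v^\varepsilon \le \|F(u^\varepsilon)\|_{\infty} = F(u^\varepsilon)$ — one sees that $t\mapsto \|F(u^\varepsilon(t))\|_{\infty}$ is nonincreasing as long as it stays above $f_+$. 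Hence $\|F(u^\varepsilon(t))\|_{\infty}\le \max(\|F(u_0)\|_{\infty},f_+)=M$ for all $t$, and inverting $F$ on the increasing branch $I_3$ together with $u^\varepsilon\ge 0$ gives the claimed pointwise bound $0\le u^\varepsilon\le M$. I expect this step to be the main obstacle: the non-monotonicity of $F$ makes the pointwise-in-$x$ differentiation delicate, and it has to be made rigorous at the level of $C^1([0,T];L^2)$ solutions rather than classical ones, most cleanly by testing the equation for $u^\varepsilon$ with $(F(u^\varepsilon)-M)_+$ and controlling the region where $F(u^\varepsilon)>M$.

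The remaining estimates~\eqref{PUB1_fb}--\eqref{PUB7_fb} follow, exactly as in Section~\ref{sect:prop_fast_react}, from the forward--backward analogue of the energy equality~\eqref{eq:PDE_for_kinetic_function_withTF}: multiplying $\varepsilon\,\partial_t u^\varepsilon = v^\varepsilon - F(u^\varepsilon)$ by $\phi(F(u^\varepsilon))$, multiplying $\partial_t u^\varepsilon = \Delta v^\varepsilon$ by $\phi(v^\varepsilon)$, and subtracting gives
$$
\partial_t \Psi(u^\varepsilon) = \Delta \Phi(v^\varepsilon) - \phi'(v^\varepsilon)\,|\nabla v^\varepsilon|^2 - \frac{\big(v^\varepsilon - F(u^\varepsilon)\big)\big(\phi(v^\varepsilon) - \phi(F(u^\varepsilon))\big)}{\varepsilon},
$$
which is precisely \eqref{eq:PDE_for_kinetic_function_withTF} with the term $\partial_t\Phi(v^\varepsilon)$ deleted, consistent with the fact that $v^\varepsilon$ carries no time derivative in the dynamics. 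Taking $\phi(v)=v$, integrating over $\Omega$ with the Neumann condition, and using $\Psi(u^\varepsilon(T))\ge 0$ together with $u_0\in L^\infty(\Omega)$ yields $\int_0^T\!\!\int_\Omega|\nabla v^\varepsilon|^2 + \varepsilon^{-1}\int_0^T\!\!\int_\Omega (v^\varepsilon - F(u^\varepsilon))^2 \le \int_\Omega \Psi(u_0) \le C$, that is~\eqref{PUB1_fb} and~\eqref{PUB3_fb}. Then~\eqref{PUB6_fb} follows from the chain rule for Sobolev functions and the $L^\infty$ bound on $v^\varepsilon$, and~\eqref{PUB7_fb} follows by pairing the displayed identity with $\varphi\in C(0,T;H^k(\Omega))$ for $k\ge d$ and bounding the three terms on the right via~\eqref{PUB1_fb},~\eqref{PUB3_fb} and the embedding $H^k(\Omega)\hookrightarrow L^\infty(\Omega)$, verbatim as in the proof of Lemma~\ref{theorem:well_posed}.
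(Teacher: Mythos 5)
Your proposal is correct, and for existence/uniqueness and for items \eqref{PUB1_fb}--\eqref{PUB7_fb} it coincides with the paper's proof: the same reformulation $\partial_t \ueps=(I-\varepsilon\Delta)^{-1}\Delta F(\ueps)$ with Picard--Lindel\"of in $L^2(\Omega)$, the same entropy identity (your displayed equation is exactly \eqref{eq:entropy_time_der_forback} with the exchange term rewritten), the choice $\phi(v)=v$ for \eqref{PUB1_fb} and \eqref{PUB3_fb}, the chain rule for \eqref{PUB6_fb}, and the duality pairing against $C(0,T;H^k(\Omega))$ for \eqref{PUB7_fb}. The genuinely different step is \eqref{PUB0_fb}. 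The paper never leaves the entropy framework: for nondecreasing $\phi$ the exchange term in \eqref{eq:entropy_time_der_forback} is nonpositive, so $t\mapsto\int_\Omega\Psi(\ueps)$ is nonincreasing; taking $\phi$ vanishing on $[0,M]$ and increasing outside forces $\Psi(\ueps)\equiv0$, which delivers the upper bound \emph{and} nonnegativity in one stroke (since such $\phi$ is negative on negative arguments), with no maximum principle, no separate truncation with $(\ueps)_-$, and no use of resolvent positivity. You instead propose a pointwise maximum principle for $\|F(\ueps(t))\|_{\infty}$ in the spirit of Novick-Cohen and Pego, which requires the Markov property of $(I-\varepsilon\Delta)^{-1}$ and a differentiation of the essential supremum; as you yourself note, this is delicate at the regularity $C^1([0,T];L^2(\Omega))$ and for piecewise affine $F$, where $F'$ has corners. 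What your route buys is a genuinely pointwise-in-time monotonicity of the sup norm; what the paper's route buys is that it is already rigorous at the available regularity and recycles the machinery set up for \eqref{eq:fastreact} in Lemma \ref{theorem:well_posed}. In fact your fallback --- testing $\varepsilon\,\partial_t\ueps=\veps-F(\ueps)$ with $(F(\ueps)-M)_+$ --- is precisely the paper's argument with the (smoothed) choice $\phi(\tau)=(\tau-M)_+$, so if the maximum-principle step resists, it folds back into the entropy argument and nothing is lost. One shared cosmetic point, not a gap on your side: from $F(\ueps)\le M$ one literally obtains $\ueps\le S_3(M)$ rather than $\ueps\le M$; the bound as stated with $M=\max(\|F(u_0)\|_{\infty},f_+)$ is inherited verbatim from the paper's own formulation.
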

\begin{proof}
We observe that equation is equivalent to the following ODE:
$$
\partial_t \ueps = (I - \varepsilon\, \Delta)^{-1} \, \Delta \, F(\ueps).
$$
As long as $\varepsilon > 0$, the (RHS) is Lipschitz continuous, say on $L^{2}(\Omega)$,  so the local well-posedness follows. To obtain global well-posedness, we consider functions $\Psi$, $\Phi$ defined in \eqref{eq:def_of_Psi_Phi}. We have
\begin{equation}\label{eq:entropy_time_der_forback}
\begin{split}
\partial_t \Psi(\ueps) = \phi(F(\ueps)) \, \ueps_t = (\phi(F(\ueps)) &- \phi(\veps)) \, \ueps_t + \phi(\veps) \, \Delta \veps = \\ &=
(\phi(F(\ueps)) - \phi(\veps)) \, \ueps_t + \Delta \Phi(\veps)  - \phi'(\veps) \, |\nabla \veps|^2.
\end{split}
\end{equation}
If $\phi$ is nondecreasing, we have 
$$
(\phi(F(\ueps)) - \phi(\veps)) \, \ueps_t = (\phi(F(\ueps)) - \phi(\veps)) \, \frac{\veps - F(\ueps)}{\varepsilon} \leq 0
$$
so after integration in space, the (RHS) of \eqref{eq:entropy_time_der_forback} is nonnegative. Hence, $\partial_t \int_{\Omega} \Psi(\ueps) \leq 0$. Choosing $\phi = 0$ for $[0,M]$ and $\phi'(x)>0$ for $x \notin [0,M]$ we prove \eqref{PUB0_fb} and conclude the proof of global well-posedness. To see \eqref{PUB1_fb} and \eqref{PUB3_fb} we take $\phi(x) = x$ and integrate \eqref{eq:entropy_time_der_forback} in time and space. Part \eqref{PUB6_fb} easily follows from chain rule and \eqref{PUB1_fb}. Finally, \eqref{PUB7_fb} follows from \eqref{eq:entropy_time_der_forback} and exactly the same computations as in Lemma \ref{theorem:well_posed}.
\end{proof}

\noindent Now, we formulate an analog of Lemma \ref{lem:entropy_equality}.
\begin{lem}[entropy equality]\label{lem:entropy_for_forback}
Let $\Psi$ be defined with \eqref{eq:def_of_Psi_Phi}. Let $g_i$ be densities given by \eqref{eq:def_density_gi}. Then, for almost all $\lambda_0$ (with respect to $F^{\#}\mu_{t,x}$) we have
\begin{equation}\label{eq:entropy_equality:forback}
\sum_{i=1}^3 \Psi(S_i(\lambda_0)) \,  g_i(\lambda_0) = \sum_{i=1}^3 \int_{\R^+} \Psi(S_i(\lambda))  \, g_i(\lambda) \diff F^{\#}\mu_{t,x}(\lambda).
\end{equation}
\end{lem}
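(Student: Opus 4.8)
The plan is to reproduce, essentially verbatim, the proof of Lemma~\ref{lem:entropy_equality}, with $\Psi(\ueps)$ now playing the role previously played by the combination $\Psi(\ueps)+\Phi(\veps)$; this is possible because the entropy balance \eqref{eq:entropy_time_der_forback} for \eqref{eq:forback} isolates $\partial_t\Psi(\ueps)$ as the only time derivative on its left-hand side. First I would invoke Lemma~\ref{theorem:well_posed:forback}~(\ref{PUB7_fb}), which gives that $\{\partial_t\Psi(\ueps)\}_{\eps\in(0,1)}$ is uniformly bounded in $(C(0,T;H^k(\Omega)))^*$ for $k$ large, together with Lemma~\ref{theorem:well_posed:forback}~(\ref{PUB6_fb}), which gives the uniform bound on $\{\nabla\varphi(\veps)\}_{\eps\in(0,1)}$ in $L^2((0,T)\times\Omega)$ for every smooth bounded $\varphi$. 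Applying the compensated compactness Lemma~\ref{lem:comp_comp_moussa} to the pair $\Psi(\ueps)$, $\varphi(\veps)$ then yields
$$
\wstlim_{\eps\to0}\,\Psi(\ueps)\,\varphi(\veps)=\wstlim_{\eps\to0}\,\Psi(\ueps)\cdot\wstlim_{\eps\to0}\,\varphi(\veps).
$$

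Next, since $\veps-F(\ueps)=\eps\,\ueps_t=\sqrt{\eps}\,\big(\sqrt{\eps}\,\ueps_t\big)$ and $\{\sqrt{\eps}\,\ueps_t\}_{\eps\in(0,1)}$ is bounded in $L^2((0,T)\times\Omega)$ by Lemma~\ref{theorem:well_posed:forback}~(\ref{PUB3_fb}), we have $\veps-F(\ueps)\to0$ strongly in $L^2((0,T)\times\Omega)$; hence, using the Lipschitz bound on $\varphi$ and the $L^\infty$ bound on $\Psi(\ueps)$ (which follows from $0\le\ueps\le M$), one may replace $\varphi(\veps)$ by $\varphi(F(\ueps))$ in the identity above. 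Rewriting in terms of the Young measure $\{\mu_{t,x}\}_{t,x}$ generated by $\{\ueps\}_{\eps\in(0,1)}$, this becomes, for a.e.\ $(t,x)$,
$$
\int_{\R^+}\Psi(\lambda)\,\varphi(F(\lambda))\diff\mu_{t,x}(\lambda)=\int_{\R^+}\Psi(\lambda)\diff\mu_{t,x}(\lambda)\;\int_{\R^+}\varphi(F(\lambda))\diff\mu_{t,x}(\lambda).
$$

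Then I would use the decomposition $\lambda=\sum_{i=1}^3 S_i(F(\lambda))\,\mathds{1}_{\lambda\in I_i}$, hence $\Psi(\lambda)=\sum_{i=1}^3\Psi(S_i(F(\lambda)))\,\mathds{1}_{\lambda\in I_i}$, split all integrals over $I_1,I_2,I_3$, push forward along $F$ to obtain integrals against $F^{\#}\mu_{t,x}^{(i)}$, and finally insert the Radon--Nikodym densities $g_i$ from \eqref{eq:def_density_gi} to express everything against $F^{\#}\mu_{t,x}$, exactly as in Lemma~\ref{lem:entropy_equality}. Since $\varphi$ ranges over all smooth bounded functions, the resulting identity of integrals against $\varphi\diff F^{\#}\mu_{t,x}$ forces the two integrands to coincide for $F^{\#}\mu_{t,x}$-a.e.\ $\lambda_0$, which is precisely \eqref{eq:entropy_equality:forback}.

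Thus the argument is a transcription of Lemma~\ref{lem:entropy_equality}; the only genuinely new point is Lemma~\ref{theorem:well_posed:forback}~(\ref{PUB7_fb}), where the structure of \eqref{eq:forback} enters through the observation that the ``bad'' term $(\phi(F(\ueps))-\phi(\veps))\,\ueps_t$ in \eqref{eq:entropy_time_der_forback} equals $-\big(\phi(\veps)-\phi(F(\ueps))\big)\big(\veps-F(\ueps)\big)/\eps$ and is therefore controlled in $L^1$ by Lemma~\ref{theorem:well_posed:forback}~(\ref{PUB3_fb}) and the Lipschitz bound on $\phi$. I expect the only mild obstacle to be the bookkeeping around the admissible class of test functions $\varphi$ (bounded with bounded derivative), which must be simultaneously rich enough to apply Lemma~\ref{lem:comp_comp_moussa} and to separate measures supported in $[0,M]$ --- precisely as in the fast-reaction case.
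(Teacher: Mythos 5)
Your proposal is correct and follows essentially the same route as the paper: the paper's proof likewise applies Lemma~\ref{lem:comp_comp_moussa} to $\Psi(\ueps)$ and $\varphi(\veps)$ via Lemma~\ref{theorem:well_posed:forback}~(\ref{PUB6_fb})--(\ref{PUB7_fb}), replaces $\veps$ by $F(\ueps)$ using (\ref{PUB3_fb}), and then passes to the push-forward measures and the densities $g_i$ exactly as in Lemma~\ref{lem:entropy_equality}. The only cosmetic difference is that you spell out the $L^1$ control of the term $(\phi(F(\ueps))-\phi(\veps))\,\ueps_t$ behind (\ref{PUB7_fb}), which the paper dispatches by reference to the computations of Lemma~\ref{theorem:well_posed}.
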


\begin{proof}
Thanks to Lemma \ref{theorem:well_posed:forback} \eqref{PUB7_fb}, for all smooth $\phi: \R \to \R$, $\{\partial_t \Psi(\ueps)\}_{\varepsilon \in (0,1)}$ is uniformly bounded in $(C(0,T; H^k(\Omega)))^*$. Similarly, for all smooth and bounded $\varphi: \R \to \R$, $\{\nabla \varphi(\veps) \}_{\varepsilon \in (0,1)}$ is uniformly bounded in $L^2((0,\infty)\times \Omega)$. Hence, Lemma \ref{lem:comp_comp_moussa} implies
$$
\wstlim_{\varepsilon \to 0} \, \Psi(\ueps)  \, \varphi(\veps) = \wstlim_{\varepsilon \to 0} \, \Psi(\ueps)  \, \wstlim_{\varepsilon \to 0} \, \varphi(\veps).
$$
As $\veps - F(\ueps) = \varepsilon \, \ueps_t \to 0$ cf. Lemma \ref{theorem:well_posed:forback} \eqref{PUB3_fb}, we may replace $\veps$ with $F(\ueps)$ in the identity above to obtain
$$
\wstlim_{\varepsilon \to 0} \, \Psi(\ueps) \, \varphi(F(\ueps))) = \wstlim_{\varepsilon \to 0} \, \Psi(\ueps)  \, \wstlim_{\varepsilon \to 0} \, \varphi(F(\ueps))).
$$
In the language of Young measures, this identity reads
$$
\int_{\R^+} \Psi(\lambda)  \,  \varphi(F(\lambda)) \diff \mu_{t,x}(\lambda) = \int_{\R^+} \Psi(\lambda) \diff \mu_{t,x}(\lambda) \, \int_{\R^+} \varphi(F(\lambda)) \diff \mu_{t,x}(\lambda).
$$
We observe that $\lambda = \sum_{i=1}^3 S_i(F(\lambda)) \, \mathds{1}_{\lambda \in I_i}$. Hence, we may use push-forward measure to write
\begin{equation*}
\sum_{i=1}^3 \int_{\R^+} \Psi(S_i(\lambda))  \,  \varphi(\lambda) \diff F^{\#}\mu_{t,x}^{(i)}(\lambda) =  \sum_{i=1}^3 \int_{\R^+} \Psi(S_i(\lambda))  \diff F^{\#}\mu_{t,x}^{(i)}(\lambda) \, \int_{\R^+} \varphi(\lambda) \diff F^{\#} \mu_{t,x}(\lambda).
\end{equation*}
Using densities $g_1(\lambda)$, $g_2(\lambda)$ and $g_3(\lambda)$ we obtain
\begin{multline*}
\sum_{i=1}^3 \int_{\R^+} \Psi(S_i(\lambda)) \,  \varphi(\lambda) \, g_i(\lambda) \diff F^{\#}\mu_{t,x}(\lambda) = \\ = \sum_{i=1}^3 \int_{\R^+} \Psi(S_i(\lambda)) \, g_i(\lambda) \diff F^{\#}\mu_{t,x}(\lambda) \, \int_{\R^+} \varphi(\lambda) \diff F^{\#} \mu_{t,x}(\lambda).
\end{multline*}
Hence, if $\lambda_0$ belongs to the support of the measure $F^{\#} \mu_{t,x}$, we obtain \eqref{eq:entropy_equality:forback}.
\end{proof}

\begin{proof}[Proof of Theorems \ref{mainthm:simple_entropy_equality}--\ref{mainthm:emptyunstablephase}] Comparing formulations of Lemma \ref{lem:entropy_equality} and \ref{lem:entropy_for_forback} we see that it is sufficient to modify proofs in Sections \ref{sect:proof_mainmain_thm}-\ref{sect:proofs_theorems} by replacing $S_1' + 1$, $S_2' + 1$ and $S_3' + 1$ with $S_1'$, $S_2'$ and $S_3'$ respectively. 
\end{proof}

\noindent Note that Theorem \ref{mainthm:newconditiononF} is only true for fast-reaction limit \eqref{eq:fastreact} because its proof exploits presence of $S_2' + 1$ in the entropy formulations.
\appendix
\section{Useful notions and results}\label{app:genlabel}
\subsection{Compensated compactness lemma} We formulate lemma used in the proof of Theorem \ref{mainthm:simple_entropy_equality}, more precisely in Lemma \ref{lem:entropy_equality}. For the proof see \cite[Proposition 1]{MR3466213}.
\begin{lem}\label{lem:comp_comp_moussa}
Let $\Omega \subset \R^n$ be a bounded domain. Suppose that $\{a_n\}_{n \in \N}$ is uniformly bounded in $L^2(0,T; H^1(\Omega))$ and $\{b_n\}_{n\in \N}$ is uniformly bounded in $L^2(0,T; L^2(\Omega))$. Moreover, assume that the sequence of distributional time derivatives $\{\partial_t b_n \}_{n \in \N}$ is uniformly bounded in the dual space $C(0,T; H^m(\Omega))^*$ for some $m \in \N$. Then, if $a_n \weak a$ and $b_n \weak b$ we have $a_n \, b_n \to a \, b$ in the sense of distributions.
\end{lem}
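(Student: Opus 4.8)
The plan is a direct duality argument. It suffices to fix $\varphi \in C_c^\infty((0,T)\times\Omega)$ and prove that $\int_0^T\!\!\int_\Omega a_n\, b_n\, \varphi\,\diff x\,\diff t \to \int_0^T\!\!\int_\Omega a\, b\, \varphi\,\diff x\,\diff t$. Two ingredients enter. First, I would show that $\{b_n\}$ is relatively compact in $L^2(0,T;H^{-1}(\Omega))$; since $b_n \weak b$ in $L^2((0,T)\times\Omega)$, a fortiori in $L^2(0,T;H^{-1}(\Omega))$, any strongly convergent subsequence must have limit $b$, and a standard subsequence argument then upgrades this to $b_n \to b$ strongly in $L^2(0,T;H^{-1}(\Omega))$. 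Second, multiplication by the fixed function $\varphi$ is a bounded linear operator from $L^2(0,T;H^1(\Omega))$ into $L^2(0,T;H^1_0(\Omega))$ — by the product rule $\|\varphi w\|_{L^2_t H^1_x}\le \|\varphi\|_{L^\infty_t W^{1,\infty}_x}\,\|w\|_{L^2_t H^1_x}$, and $\varphi w$ has compact support in $\Omega$ — hence weak-to-weak continuous; since $a_n\weak a$ in $L^2(0,T;H^1(\Omega))$ it follows that $a_n\varphi \weak a\varphi$ in $L^2(0,T;H^1_0(\Omega))$.

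For the compactness of $\{b_n\}$ I would invoke an Aubin--Lions--Simon type theorem. Without loss of generality $m \ge 1$, since enlarging $m$ only weakens the hypothesis ($C(0,T;H^{m'})\hookrightarrow C(0,T;H^{m})$ continuously for $m'\ge m$, so the duals embed the other way). For a bounded domain $\Omega$ one has the Rellich embedding $H^1_0(\Omega)\hookrightarrow\hookrightarrow L^2(\Omega)$, hence by duality $L^2(\Omega)\hookrightarrow\hookrightarrow H^{-1}(\Omega)$, while $H^{-1}(\Omega)\hookrightarrow H^{-m}(\Omega)$ continuously. Moreover $(C(0,T;H^m(\Omega)))^*$ can be viewed as a subspace of the $H^{-m}(\Omega)$-valued regular Borel measures on $[0,T]$. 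Thus $\{b_n\}$ is bounded in $L^2(0,T;L^2(\Omega))$ with $\{\partial_t b_n\}$ bounded in the space of $H^{-m}(\Omega)$-valued measures, and one has the chain $L^2(\Omega)\hookrightarrow\hookrightarrow H^{-1}(\Omega)\hookrightarrow H^{-m}(\Omega)$; a version of Simon's compactness criterion then yields relative compactness of $\{b_n\}$ in $L^2(0,T;H^{-1}(\Omega))$, as needed.

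With both ingredients available, I would conclude by writing, for a.e.\ $t$, $\int_\Omega a_n(t)\,b_n(t)\,\varphi(t,\cdot)\,\diff x = \langle b_n(t),\, a_n(t)\,\varphi(t,\cdot)\rangle_{H^{-1}(\Omega),\,H^1_0(\Omega)}$, where $b_n(t)\in L^2(\Omega)$ is identified with an element of $H^{-1}(\Omega)$; integrating in time gives
$$
\int_0^T\!\!\int_\Omega a_n\, b_n\, \varphi\,\diff x\,\diff t \;=\; \int_0^T \langle b_n(t),\, a_n(t)\,\varphi(t,\cdot)\rangle_{H^{-1}(\Omega),\,H^1_0(\Omega)}\,\diff t .
$$
The right-hand side is the duality pairing of $L^2(0,T;H^{-1}(\Omega))$ with $L^2(0,T;H^1_0(\Omega))$; since $b_n\to b$ strongly in the former and $a_n\varphi\weak a\varphi$ weakly in the latter, and the pairing of a strongly convergent sequence with a weakly convergent one converges to the pairing of the limits, the right-hand side tends to $\int_0^T \langle b(t), a(t)\varphi(t,\cdot)\rangle\,\diff t = \int_0^T\!\!\int_\Omega a\, b\, \varphi\,\diff x\,\diff t$. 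This gives $a_n b_n\to ab$ in $\mathcal{D}'((0,T)\times\Omega)$.

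I expect the main obstacle to be the compactness step: one must identify the rather unusual dual space $(C(0,T;H^m(\Omega)))^*$ as sitting inside a space of vector-valued measures and then appeal to a form of the Aubin--Lions--Simon lemma that permits the bound on $\partial_t b_n$ to be merely in measures rather than in $L^1(0,T;H^{-m}(\Omega))$ — this is precisely the content of the cited \cite[Proposition~1]{MR3466213}. The remaining points — that boundedness of $\Omega$ already suffices for $L^2(\Omega)\hookrightarrow\hookrightarrow H^{-1}(\Omega)$, and that the full sequence (not merely a subsequence) converges strongly once the weak $L^2$-limit has been identified — are routine. It is worth emphasising that the statement is genuinely one of \emph{compensated} compactness: neither $\{a_n\}$ nor $\{b_n\}$ need be strongly precompact in $L^2((0,T)\times\Omega)$, yet the product passes to the limit because the spatial $H^1$-regularity of $a_n$ is paired against the spatial compactness of $b_n$ extracted from the control of $\partial_t b_n$.
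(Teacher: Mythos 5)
Your overall route is sound and, since the paper itself offers no proof of this lemma (it is simply quoted from Moussa's Proposition~1), it is the natural one to reconstruct: extract strong compactness of $b_n$ in a negative Sobolev space from the bound on $\partial_t b_n$, and pair it against $a_n\varphi \weak a\varphi$ in $L^2(0,T;H^1_0(\Omega))$; the duality bookkeeping in your final paragraph is correct. However, the step you delegate to ``a version of Simon's compactness criterion'' is exactly where the content lies, and as written it has two gaps. First, the endpoint exponent: standard Aubin--Lions--Simon statements with $\partial_t b_n$ bounded only in $L^1(0,T;Y)$ or in $Y$-valued measures are often quoted with compactness in $L^q(0,T;B)$ for $q<2$, and your pairing genuinely needs $q=2$, because $a_n\varphi$ is only bounded in $L^2(0,T;H^1_0(\Omega))$. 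The endpoint does hold here, but it needs an argument: the measure bound gives a BV-in-time representative of $b_n$ with values in $H^{-m}$ and uniformly bounded total variation, hence (together with the $L^2_tL^2_x$ bound) a uniform bound in $L^\infty(0,T;H^{-m})$; then $\int_0^{T-h}\|b_n(t+h)-b_n(t)\|^2_{H^{-m}}\diff t \le 2\,\|b_n\|_{L^\infty_t H^{-m}}\, h\, \mathrm{TV}(\partial_t b_n)\le C h$, and Ehrling's inequality $\|w\|_{H^{-1}}\le \eps\|w\|_{L^2}+C_\eps\|w\|_{H^{-m}}$ upgrades this to uniform smallness of time translations in $L^2(0,T-h;H^{-1})$; combined with compactness of the time averages $\int_{t_1}^{t_2}b_n\diff t$ in $H^{-1}$, Simon's characterization then yields relative compactness in $L^2(0,T;H^{-1}(\Omega))$. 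Note also that you cannot cite Moussa's Proposition~1 for this compactness step: that proposition \emph{is} the product lemma being proved, so the citation would be circular; cite his Aubin--Lions variants or argue as above.

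Second, a smaller but genuine wrinkle in your embedding chain: with $H^{-1}(\Omega)=(H^1_0(\Omega))^*$ and $H^{-m}(\Omega)=(H^m(\Omega))^*$ there is no canonical embedding $H^{-1}(\Omega)\hookrightarrow H^{-m}(\Omega)$, since $H^m(\Omega)\not\subset H^1_0(\Omega)$. Repair it by first restricting the hypothesis: a functional bounded on $C(0,T;H^m(\Omega))$ is a fortiori bounded on $C(0,T;H^m_0(\Omega))$, so you may work with $Y=(H^m_0(\Omega))^*$, for which $L^2(\Omega)\hookrightarrow\hookrightarrow (H^1_0(\Omega))^*\hookrightarrow (H^m_0(\Omega))^*$ does hold (and your observation that Rellich for $H^1_0$ requires no boundary regularity is precisely why one should stay on the zero-trace scale). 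With these two repairs your argument is complete and proves the lemma as stated.
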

\noindent In our case, the considered sequences are also in $L^{\infty}((0,T)\times \Omega)$ so the resulting convergence is true in the weak$^*$ sense.

\subsection{Support of the measure} We recall definition of the support of measure on $\R^n$ \cite[Definition 1.14]{MR3409718}. For this, let $B(x,r)$ denote a ball of radius $r>0$ centered at $x \in \R^n$.

\begin{Def}
Let $\mu$ be a nonnegative measure on $\R^n$. We say that $x \in \suppo \mu$ if and only if $\mu(B(x,r)) > 0$ for all $r>0$.
\end{Def}

\begin{rem}
When given property (like equation) is satisfied for almost every $x$ (with respect to $\mu$) one may worry that it is not true for the particularly chosen value of $x$. This is not the problem if one takes $x \in \suppo \mu$ because in each neighbourhood of $x$ there is $y \in \suppo \mu$ such that the property has to be satisfied because the measure of each neighbourhood is nonzero. 
\end{rem}
\subsection{Young measures}\label{app:YM} Finally, we recall the theory of Young measures introduced by Young \cite{MR6832,MR6023} and recalled in the seminal paper of Ball \cite{MR1036070}. Reader interested in modern presentation may consult \cite{MR1034481}, \cite[Chapter 6]{MR1452107} or \cite[Chapter 4]{MR3821514}. For simplicity, we formulate it for sequences of functions $\{u_n\}_{n \in \N}$ uniformly bounded in $L^{p}(\Omega)$ with some $1 \leq p \leq \infty$ and $\Omega \subset \R^n$ being a bounded domain. 
\noindent We start with the most important result that we cite from \cite[Theorem 6.2]{MR1452107}:
\begin{thm}[Fundamental Theorem of Young Measures]\label{app:fundamental_thm}
Let $\Omega \subset \R^n$ be a a bounded domain and let $\{u_n\}_{n \in \mathbb{N}}$ be a sequence bounded in $L^{p}(\Omega)$ with $1 \leq p \leq \infty$. Then, there exists a subsequence (not relabeled) and a weakly-$\ast$ measurable family of probability measures $\{\mu_x\}_{x \in \Omega}$ such that for all bounded and smooth $G: \R \to \R$, we have
\begin{equation}\label{app:weaklimitL1}
G(u_n(x)) \weaks \int_{\R} G(\lambda) \, \diff \mu_x(\lambda) \qquad \mbox{ in } L^{\infty}(\Omega).
\end{equation}
We say that the sequence $\{u_n\}_{n \in \N}$ generates the family of Young measures $\{\mu_x\}_{x\in \Omega}$. 
\end{thm}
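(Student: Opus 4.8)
The plan is to realize the elementary Young measures $\delta_{u_n(x)}$ as elements of a dual Banach space, extract a weak-$*$ limit, and then check that this limit is represented by a genuine measurable family of probability measures. Concretely, I would pass to the two-point compactification $\overline{\R}=[-\infty,+\infty]$ and associate to each $u_n$ the map $\Lambda_n\colon x\mapsto\delta_{u_n(x)}\in\M(\overline{\R})$. The structural fact I would invoke is the isometric identification of $L^{\infty}_{w*}(\Omega;\M(\overline{\R}))$ — the space of weak-$*$ measurable maps $x\mapsto\mu_x$ with $\esssup_{x}\|\mu_x\|_{\M}<\infty$ — with the dual of the Banach space $L^1(\Omega;C(\overline{\R}))$; this disintegration-type duality is where essentially all the measure-theoretic content sits. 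Since $\overline{\R}$ is a compact metric space and $|\Omega|<\infty$, the space $L^1(\Omega;C(\overline{\R}))$ is separable, so the closed unit ball of its dual is sequentially weak-$*$ compact.

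Each $\Lambda_n$ lies in that unit ball because $\|\delta_{u_n(x)}\|_{\M}=1$ for a.e.\ $x$, so after passing to a subsequence I obtain $\Lambda_n\weaks\Lambda$ for some $\Lambda=(x\mapsto\mu_x)$, i.e.
\[
\int_\Omega\!\int_{\overline{\R}}\psi(x,\lambda)\,\diff\delta_{u_n(x)}(\lambda)\,\diff x\;\longrightarrow\;\int_\Omega\!\int_{\overline{\R}}\psi(x,\lambda)\,\diff\mu_x(\lambda)\,\diff x
\]
for every $\psi\in L^1(\Omega;C(\overline{\R}))$. Taking $\psi(x,\lambda)=\varphi(x)\,G(\lambda)$ with $\varphi\in L^1(\Omega)$ and $G\in C(\overline{\R})$ is exactly weak-$*$ convergence of $G(u_n)$ towards $\int G\,\diff\mu_x$ in $L^\infty(\Omega)$; testing against nonnegative $\varphi,G$ gives $\mu_x\geq 0$ a.e., and testing with $G\equiv 1$ gives $\mu_x(\overline{\R})=1$ for a.e.\ $x$.

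It then remains to (i) show $\mu_x$ carries no mass at $\pm\infty$, so that $\mu_x$ is a probability measure on $\R$, and (ii) upgrade the convergence from $G\in C(\overline{\R})$ to arbitrary bounded smooth $G$. Both rest on a uniform tightness estimate coming from the $L^p$ bound: for a cutoff $\chi_R\in C(\overline{\R})$ with $\chi_R=1$ near $\pm\infty$ and $\suppo\chi_R\subset\{|\lambda|>R-1\}$, and any measurable $A\subset\Omega$, passing to the limit gives $\int_A\!\int_{\overline{\R}}\chi_R\,\diff\mu_x\,\diff x=\lim_n\int_A\chi_R(u_n)\,\diff x\leq\liminf_n|A\cap\{|u_n|>R-1\}|$, which for $p<\infty$ is at most $\sup_n\|u_n\|_{L^p}^p/(R-1)^p\to0$ as $R\to\infty$ (and for $p=\infty$ it simply vanishes once $R$ exceeds the uniform bound). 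Taking $A=\Omega$ forces $\mu_x(\{\pm\infty\})=0$ a.e., i.e.\ $\mu_x\in\mathcal P(\R)$. For (ii), since $\{G(u_n)\}$ is bounded in $L^\infty(\Omega)$ it suffices to test against simple functions $\varphi$; given $\eta>0$, pick $R$ so large that $\sup_n|\{|u_n|>R\}|<\eta$ and $\int_\Omega\mu_x(\{|\lambda|>R\})\,\diff x<\eta$, replace $G$ outside $[-R,R]$ by a function in $C(\overline{\R})$ agreeing with it on $[-R,R]$, control the induced errors by $\|G\|_\infty$ times these tightness bounds, and apply the already-proven convergence to the modified test function.

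The step I expect to be the genuine obstacle is the identification $L^{\infty}_{w*}(\Omega;\M(\overline{\R}))\cong L^1(\Omega;C(\overline{\R}))^*$ together with the resulting weak-$*$ measurability of $x\mapsto\mu_x$: producing an abstract weak-$*$ limit functional is routine, but showing it is induced by a measurable family of measures requires a selection/disintegration argument. An alternative that sidesteps the abstract duality is to argue by hand — run a diagonal extraction over a countable dense set of test functions in $C_0(\R)$ to obtain, for a.e.\ $x$, a positive bounded linear functional on $C_0(\R)$, apply the Riesz representation theorem pointwise, and then verify measurability of the resulting family — but this merely re-derives the same disintegration statement. The tightness estimate in (i)–(ii) is the only place where the hypothesis $1\le p\le\infty$ (rather than mere local integrability) is actually used.
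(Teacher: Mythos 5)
Your proposal is correct. Note that the paper does not prove this theorem at all: it is quoted verbatim from the cited reference (Theorem 6.2 of Pedregal's book), so there is no internal proof to compare against. What you write is essentially the standard textbook argument that the citation points to: embed the elementary Young measures $x\mapsto\delta_{u_n(x)}$ into $L^1(\Omega;C(\overline{\R}))^*$ via the duality with weak-$*$ measurable measure-valued maps, extract a weak-$*$ limit by separability and Banach--Alaoglu, and then use the uniform $L^p$ bound twice -- once to rule out mass at $\pm\infty$, once to pass from test functions in $C(\overline{\R})$ to arbitrary bounded smooth $G$ (which need not have limits at infinity). You correctly flag the duality/disintegration identification as the one genuinely nontrivial imported ingredient, and your tightness and truncation estimates are the right way to close the remaining steps; the only routine points left implicit (a single null set for nonnegativity and unit mass of $\mu_x$, obtained from a countable dense family of test functions, and density of simple functions in $L^1(\Omega)$) are standard.
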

\noindent Now we list properties of Young measures used in the paper.
\begin{lem}\label{lem:YM_prop} Under notation of Theorem \ref{app:fundamental_thm}, the following hold true.
\begin{itemize}
    \item[(A)] We have 
$
u_n \to u 
$
a.e. (up to a subsequence) if and only if $ \mu_{t,x} = \delta_{u(t,x)}$.
    \item[(B)] If $\{w_n\}_{n\in \N}$ is another bounded sequence such that $u_n - w_n \to 0$ a.e. then Young measures generated by $\{u_n\}_{n\in \N}$ and $\{w_n\}_{n \in \N}$ coincide.
    \item[(C)] If $F: \R \to \R$, the sequence $\{F(u_n)\}_{n\in \N}$ generates Young measure $F^{\#}\mu_{t,x}$ (i.e. push-forward $\mu_{t,x} \circ F^{-1}$). 
\end{itemize}
\end{lem}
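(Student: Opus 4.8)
The plan is to obtain all three parts directly from the Fundamental Theorem of Young Measures (Theorem \ref{app:fundamental_thm}), using only the uniqueness of weak-$\ast$ limits in $L^\infty(\Omega)$ and the elementary fact that a probability measure on $\R$ is determined by its integrals against a fixed \emph{countable} family of bounded, smooth, Lipschitz functions that separates probability measures on $\R$ --- for instance $\lambda\mapsto\cos(q\lambda)$ and $\lambda\mapsto\sin(q\lambda)$ with $q\in\mathbb{Q}$, whose integrals recover the characteristic function (continuous, hence determined by its values on $\mathbb{Q}$). This last point is precisely what lets one upgrade ``for each test function, for a.e.\ $x$'' to ``for a.e.\ $x$, for all test functions'' by discarding countably many null sets. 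I would prove (C) first, then (B), deduce the forward implication of (A) from (B), and leave the converse of (A) for last.

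For (C): since $F$ is continuous (in the applications it is Lipschitz), for every bounded continuous $G:\R\to\R$ the composition $G\circ F$ is again bounded and continuous, so Theorem \ref{app:fundamental_thm} applied to $\{u_n\}$ gives $G(F(u_n))=(G\circ F)(u_n)\weaks\int_{\R}(G\circ F)(\lambda)\diff\mu_x(\lambda)$ in $L^\infty(\Omega)$. The change-of-variables identity $\int_{\R}(G\circ F)\diff\mu_x=\int_{\R}G\diff(F^{\#}\mu_x)$ then says exactly that $\{F(u_n)\}$ generates the Young measure $F^{\#}\mu_x$.

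For (B): applying Theorem \ref{app:fundamental_thm} twice, pass to a common subsequence along which $\{u_n\}$ generates $\{\mu_x\}$ and $\{w_n\}$ generates some family $\{\nu_x\}$. For a bounded Lipschitz $G$ one has the pointwise bound $|G(u_n)-G(w_n)|\le\min\!\big(2\|G\|_\infty,\ \mathrm{Lip}(G)\,|u_n-w_n|\big)\to0$ a.e., so, since $\Omega$ is bounded, dominated convergence gives $G(u_n)-G(w_n)\to0$ in $L^1(\Omega)$ and hence weakly-$\ast$ in $L^\infty(\Omega)$; comparing with the two limits from the theorem yields $\int_{\R}G\diff\mu_x=\int_{\R}G\diff\nu_x$ for a.e.\ $x$, and running $G$ through the countable determining family gives $\mu_x=\nu_x$ a.e. The forward half of (A) is then immediate: if $u_n\to u$ a.e.\ then $u\in L^p(\Omega)$ (Fatou for $p<\infty$, lower semicontinuity of $\|\cdot\|_\infty$ for $p=\infty$), so the constant sequence $w_n\equiv u$ is admissible, plainly generates $\{\delta_{u(x)}\}$, and satisfies $u_n-w_n\to0$ a.e.; by (B), $\mu_x=\delta_{u(x)}$ a.e.

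For the converse of (A), assume $\mu_x=\delta_{u(x)}$ a.e.; then $u$ is real-valued a.e.\ because each $\mu_x$ is a probability measure on $\R$. Apply Theorem \ref{app:fundamental_thm} with the bounded smooth test functions $G(\lambda)=\arctan\lambda$ and $G(\lambda)=(\arctan\lambda)^2$ to get $\arctan u_n\weaks\arctan u$ and $(\arctan u_n)^2\weaks(\arctan u)^2$ in $L^\infty(\Omega)$. Testing the first against $\arctan u\in L^\infty(\Omega)\subset L^1(\Omega)$ and the second against $1\in L^1(\Omega)$ (here $|\Omega|<\infty$ enters) gives
$$
\int_\Omega(\arctan u_n-\arctan u)^2\diff x=\int_\Omega(\arctan u_n)^2\diff x-2\int_\Omega\arctan u_n\,\arctan u\,\diff x+\int_\Omega(\arctan u)^2\diff x\ \longrightarrow\ 0,
$$
so $\arctan u_n\to\arctan u$ in $L^2(\Omega)$; extracting an a.e.-convergent subsequence and applying the homeomorphism $\tan:(-\tfrac{\pi}{2},\tfrac{\pi}{2})\to\R$ recovers $u_n\to u$ a.e.\ along that subsequence. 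I expect this converse to be the only nonroutine step: the heart of it is the ``zero-variance'' identity displayed above, which forces $L^2$ convergence of the bounded quantities $\arctan u_n$, and composing with a bounded homeomorphism is exactly the device that translates the pointwise degeneracy $\mu_x=\delta_{u(x)}$ into genuine a.e.\ convergence of a subsequence; the remaining bookkeeping (combining countably many null sets, stability of Young measures under further subsequences) is routine.
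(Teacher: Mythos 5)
Your proposal is correct and takes essentially the same route as the paper's sketch: (C) by the change-of-variables identity for the push-forward, (B) by observing that the weak-$\ast$ limits of $G(u_n)$ and $G(w_n)$ coincide, and the nontrivial half of (A) by the zero-variance $L^2$ argument forcing strong convergence and then a.e.\ convergence along a subsequence. The only difference is a technical refinement: you run that argument on $\arctan u_n$ with the bounded test functions $\arctan\lambda$ and $(\arctan\lambda)^2$, whereas the paper uses $G(\lambda)=\lambda$ and $G(\lambda)=\lambda^2$ directly (harmless in its setting, since the relevant sequences are uniformly bounded in $L^{\infty}$), so your version is simply the more careful one for general $L^p$.
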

\begin{proof}[Sketch of the proof]
For (A) we consider $G(u) = u$ and $G(u) = u^2$ to deduce that $u_n \to u$ in $L^2(\Omega)$ so that up to a subsequence also a.e. The opposite direction is clear because $G(u_n(x)) \to G(u(x))$ a.e. For (B) we note that for all bounded and smooth $G$, weak limits of $G(u_n(x))$ and $G(w_n(x))$ coincide. For (C) we write
$$
G(F(u_n)) = \int_{\R} G(F(\lambda)) \diff \mu_{t,x}(\lambda) = \int_{\R} G(\lambda) \diff (\mu_{t,x} \circ F^{-1})(\lambda).
$$
\end{proof}
\bibliographystyle{abbrv}
\bibliography{fastlimit}

\end{document}